\newcommand{\co}{\colon\thinspace}
\DeclareMathOperator{\tr}{tr}
\DeclareMathOperator{\Cob}{Cob}
\DeclareMathOperator{\PCob}{Pre-Cob}
\DeclareMathOperator{\Endo}{End}
\DeclareMathOperator{\Star}{St}
\DeclareMathOperator{\Kom}{Kom}
\DeclareMathOperator{\Mat}{Mat}
\DeclareMathOperator{\Morph}{Hom}
\DeclareMathOperator{\Cone}{Cone}
\DeclareMathOperator{\Univ}{U}
\DeclareMathOperator{\TL}{TL}
\DeclareMathOperator{\BMW}{BMW}
\DeclareMathOperator{\K}{K}
\DeclareMathOperator{\SU}{SU}
\DeclareMathOperator{\SO}{SO}
\DeclareMathOperator{\Homology}{H}
\newcommand{\inp}[1]{\ensuremath{\langle #1 \rangle}}
\newcommand{\normaltext}[1]{\textnormal{#1}}
\newcommand{\quantsu}[0]{\Univ_q\mathfrak{su}(2)}
\newcommand{\cTrace}{\CPic{trace_x}}
\newcommand{\inv}{^{-1}}
\newcommand{\Z}{\mathbb{Z}}
\newcommand{\hsaddle}{\myhsaddle}
\newcommand{\isaddle}{\myisaddle}
\newcommand{\projector}{\twoprojector}
\newcommand{\CPic}[1]{
\begin{minipage}{.45in}
\includegraphics[scale=.75]{#1}
\end{minipage}
}
\newcommand{\CPPic}[1]{
\begin{minipage}{.8in}
\includegraphics[scale=1.1]{#1}
\end{minipage}
}
\newcommand{\MPic}[1]{
\begin{minipage}{.35in}
\includegraphics[scale=.45]{#1}
\end{minipage}
}
\newcommand{\BPic}[1]{
\begin{minipage}{1in}
\includegraphics[scale=1.5]{#1}
\end{minipage}
}
\newcommand{\Cpic}{\CPic}
\newcommand{\Mpic}{\MPic}
\theoremstyle{plain}
\newtheorem{theorem}[subsection]{Theorem}
\newtheorem*{conjecture}{Conjecture}
\newtheorem{proposition}[subsection]{Proposition}
\newtheorem{corollary}[subsection]{Corollary}
\newtheorem{lemma}[subsection]{Lemma}
\theoremstyle{remark}
\theoremstyle{definition}
\newtheorem{definition}[subsection]{Definition}
\numberwithin{equation}{section}
\begin{document}
\title[$\SO(3)$ Homology of Graphs and Links]{${\mathbf{SO(3)}}$ Homology of Graphs and Links}

\author[Benjamin Cooper, Matt Hogancamp and Vyacheslav Krushkal]{Benjamin Cooper, Matt Hogancamp and Vyacheslav Krushkal}

\address{Department of Mathematics, University of Virginia, Charlottesville, VA 22904}
\email{bjc4n\char 64 virginia.edu, mhoganca\char 64 gmail.com, krushkal\char 64 virginia.edu}

\begin{abstract}
The $\SO(3)$ Kauffman polynomial and the chromatic polynomial of planar
graphs are categorified by a unique extension of the Khovanov homology
framework. Many structural observations and computations of homologies of
knots and spin networks are included.
\end{abstract}

\maketitle

\section{Introduction}

In \cite{KH} Mikhail Khovanov introduced a categorification of the
Temperley-Lieb algebra. Recently, two of the authors \cite{CK} have shown
that there are chain complexes within this construction that become the
Jones-Wenzl projectors in the image of the Grothendieck group $\K_0$. These
chain complexes are unique up to homotopy and idempotent with respect to the
tensor product: $C\otimes C\simeq C$.  It is now well-known \cite{FK} that
the chromatic algebra and the $\SO(3)$ Birman-Murakami-Wenzl algebra can be
constructed using the second Jones-Wenzl projector. In this paper we use the
formulation of Bar-Natan \cite{DBN} to extend the original categorification
of the Temperley-Lieb algebra to categorifications of the $\SO(3)$ BMW
algebra and the chromatic algebra. Previous work on the categorification of
the chromatic polynomial \cite{GR,Stosic} has been focused on constructions
which are in many respects independent of structural choices such as the
Frobenius algebra. In this paper we obtain an essentially unique
categorification of the chromatic polynomial of planar graphs.

We begin by interpreting the second Jones-Wenzl projector in the
Temperley-Lieb algebra as an algebra of $q$-power series with
$\Z$-coefficients,
$$p_2\;=\;\smoothing\, -\, \frac{1}{q+q^{-1}}\;\; \hsmoothing\;=\; \smoothing\, +\, \sum_{i=1}^{\infty}(-1)^i q^{2i-1}\; \;\hsmoothing$$
This power series is replaced by a chain complex in the categorification
which is then shown to satisfy uniqueness and idempotence
properties up to homotopy. While the categorification of the Jones-Wenzl projectors $p_n$
for all $n$ is presented in \cite{CK}, in this paper we give a self-contained account for the
second projector.  Using this chain complex the 2-categorical
``canopolis'' structure of the Khovanov categorification then extends from a
categorification of the Temperley-Lieb planar algebra to a categorification
of the $\SO(3)$ BMW algebra and chromatic algebra. It is checked that the
local relations in these algebras are satisfied up to homotopy by our
construction.

We conclude with a number of calculations of homologies of links and spin networks and
some preliminary observations about the structure of the space of morphisms.
Two explicit calculations are included in order to demonstrate the ease with
which our model lends itself to calculation. We include the chromatic
homology for tree and generalized theta graphs. The homology of the sheet algebra is
computed and we conjecture that all graph homology is structured in a
specific way. Due to the universal nature of the construction in \cite{CK}
the authors believe that these calculations will agree with those made using
other frameworks for the categorification of representation theory.

\section{Diagrammatic Algebras} \label{TL section}

This section summarizes the relevant background on definitions of the
Temperley-Lieb algebra, the chromatic algebra and the $\SO(3)$ BMW
algebra, and on the relations between them.

\subsection{Temperley-Lieb Algebra} \label{TL subsection}  The Temperley-Lieb algebra $\TL_n$ is the
$\mathbb{Z}[q,q^{-1}]$-algebra determined by subjecting the generators $1$,
$e_1$, $e_2,\ldots, e_{n-1}$ to the relations:

\begin{enumerate}
\item $e_i \cdot e_j = e_j\cdot e_i$ if $|i-j| \geq 2$.
\item $e_i\cdot e_{i\pm 1}\cdot e_i = e_i$
\item $e_i^2 = -[2] e_i$
\end{enumerate}

where the quantum integer $[2] = q + q^{-1}$.

Each generator $e_i$ can be pictured as a diagram consisting of $n$ chords
between two collections of $n$ points on two horizontal lines in the plane.
All strands are vertical except for two, connecting the $i$th and the
$(i+1)$-st points in each collection. For instance, when $n = 3$ we have the
following diagrams:
$$1 = \CPic{n3-1}\hspace{.05in},\quad e_1 = \CPic{n3-e1} \textnormal{\quad and\quad } e_2 = \CPic{n3-e2} $$

The multiplication is given by vertical composition of diagrams. Planar
isotopy induces relations 1 and 2 between the generators above while the
third relation states that a disjoint circle evaluates to $-q-q^{-1}$.

This algebra is well-known in low-dimensional topology due to the extension
from planar diagrams to tangles given by the Kauffman bracket relations:
$$\CPic{n2-orcross} = q \CPic{n2-1} -\hspace{.2in} q^{2} \CPic{n2-s} \normaltext{\quad and \quad} \CPic{n2-orcross-2} = q^{-2} \CPic{n2-s} -\hspace{.2in} q^{-1}\CPic{n2-1} .$$

$\TL_n$ is included into $\TL_{n+1}$ by adding a vertical strand on the
right, and $\TL$ is defined to be $\cup_n \TL_n$. The trace $\tr_{\TL}\co
\TL_n\longrightarrow {\mathbb{Z}[q,q^{-1}]}$ is defined on the additive generators
(rectangular pictures) by connecting the top and bottom endpoints by
disjoint arcs in the complement of the rectangle in the plane. The result is
a disjoint collection of circles in the plane, which are then evaluated by
taking $(q+q^{-1})^{\# circles}$.

\begin{definition}{(Jones-Wenzl projector)}\label{jwproj tl}
There is a special element $p_2 \in \TL_{2}$ (where the coefficients are taken to be rational functions of the variable $q$),
$$p_2 = 1 - \frac{1}{q+q^{-1}} e_1,$$

called the second \emph{Jones-Wenzl projector}. Graphically,
$$\CPic{p2box} = \CPic{n2-1} - \frac{1}{q+q^{-1}} \CPic{n2-s}$$
\end{definition}

The second Jones-Wenzl projector $p_2$ satisfies the properties

\begin{enumerate}
\item $p_2 \cdot e_1 = 0 = e_1\cdot p_2$
\item $p_2 \cdot p_2 = p_2$
\end{enumerate}

In representation theory, the Temperley-Lieb algebra is the algebra of
$\quantsu$-equivariant maps between $n$-fold tensor powers of the
fundamental representation $V$:
$$ \TL_n = \Morph_{\quantsu}(V^{\otimes n},V^{\otimes n}). $$

The subalgebra determined by the projector $p_2$ corresponds to the second
irreducible representation of $\quantsu$. The second irreducible
representation of $\SU(2)$ is the fundamental representation of $\SO(3)$.

\subsection{The ${\mathbf{SO(3)}}$ BMW algebra}\label{so bmw sec}
We review some background material on the $\SO(N)$ Birman-Murakami-Wenzl
algebra; see \cite{BW,M} for more details.  $\BMW(N)_n$ is the algebra of
framed tangles on $n$ strands in $D^2\times [0,1]$ modulo regular isotopy and the
$\SO(N)$ Kauffman skein relations:

\begin{align*}
\CPic{n2-cross} - \CPic{n2-cross-2} = (q^2 - q^{-2})\Bigg(\CPic{n2-1} - \CPic{n2-s}\Bigg), \\
\CPic{flipper2} = q^{2(N-1)}\CPic{line} \normaltext{\quad and\quad} \CPic{flipper1} = q^{2(1-N)}\CPic{line}.
\end{align*}

By a tangle we mean a collection of curves (some of them perhaps closed)
embedded in $D^2\times [0,1]$, with precisely $2n$ endpoints, $n$ in
$D^2\times\{0\}$ and $D^2\times\{1\}$ each, at the prescribed marked points
in the disk. The tangles are framed, i.e. they are given with a
trivialization of their normal bundle. This is necessary since the $q^{\pm
  2(1-N)}$-skewed versions of the first Reidemeister move in the Kauffman
relations above are inconsistent with invariance under the first
Reidemeister move. As with $\TL_n$, the multiplication is given by vertical
stacking. Like above, $\BMW(N)=\cup_n \BMW(N)_n$.

The Markov trace $\hbox{tr}_K\co \BMW(N)_n\longrightarrow {\mathbb
  Z}[q,q^{-1}]$ is defined on the generators by connecting the top and
bottom endpoints by standard parallel arcs in the complement of $D^2\times[0,1]$ in
$3$-space, sweeping from top to bottom, and computing the $\SO(N)$ Kauffman
polynomial (using the above skein relations) of the resulting link. Below we
will discuss this trace in detail.

Since the object of main interest in this paper is the $\SO(3)$ algebra, we
will omit $N=3$ from the notation, and set $\BMW_n = \BMW(3)_n$.

\subsection{The chromatic polynomial and the chromatic algebra} \label{chromaticpolydef}
The {\em chromatic polynomial} ${\chi}_{\Gamma}(Q)$ of a graph $\Gamma$, for
$Q\in {\mathbb Z}_+$, is the number of colorings of the vertices of $\Gamma$
with the colors $1,\ldots, Q$ where no two adjacent vertices have the same
color. To study ${\chi}_{\Gamma}(Q)$ for non-integer values of $Q$, it is
convenient to use the contraction-deletion relation. Given any edge $e$ of
$\Gamma$ which is not a loop,
\begin{equation}
\label{chromatic poly1}
\chi_{\Gamma}^{}(Q)={\chi}_{{\Gamma}\backslash e}(Q)-{\chi}_{{\Gamma}/e}(Q)
\end{equation}

where ${\Gamma}\backslash e$ is the graph obtained from $\Gamma$ by deleting
$e$, and ${\Gamma}/e$ is obtained from $\Gamma$ by contracting $e$. (If
$\Gamma$ contains a loop then ${\chi}_{\Gamma}\equiv 0$).  Note: while
discussing the chromatic algebra, we will interchangeably use two variables,
$Q$ and $q$,where $Q=(q+q^{-1})^2$.

The defining contraction-deletion rule (\ref{chromatic poly1}) may be viewed
as a linear relation between the graphs $G, G/e$ and $G\backslash e$, so in
this context it is natural to consider the vector space defined by graphs,
rather than just the set of graphs. Thus consider the set ${\mathcal G}_n$ of
the isotopy classes of planar
graphs $G$ embedded in a rectangle with $n$ endpoints at the top
and $n$ endpoints at the bottom of the rectangle,
and let ${\mathcal F}_n$ denote the
free algebra over ${\mathbb Z}[q,q^{-1}]$ with free additive generators given by
the elements of ${\mathcal G}_n$.  As usual, the multiplication is given by
vertical stacking in the plane.

The local relations among the elements of ${\mathcal G}_n$, analogous to
contraction-deletion rule for the chromatic polynomial, are given in the
figures below.  Note that these relations only apply to {\em inner} edges
which do not connect to the top and the bottom of the rectangle. They are

(\ref{chrom relation 1}) If $e$ is an inner edge of a graph $G$ which is not a loop, then
$G=G/e-G\backslash e$.

(\ref{chrom relation 2}) If $G$ contains an inner edge $e$ which is a loop, then $G=(q^2+1+q^{-2})\;
G\backslash e$.  (In particular, this relation applies if $e$ is a simple
closed curve not connected to the rest of the graph.)
If $G$ contains a $1$-valent vertex (in the interior of the rectangle)
then $G=0$. Graphically:

{\Small
\begin{equation} \label{chrom relation 1} \CPPic{chromR1-1n} = \CPPic{chromR1-2n} - \CPPic{chromR1-3n},
\end{equation}

\begin{equation} \label{chrom relation 2} \CPPic{chromR2-1n}\!\! = (q^2+1+q^{-2}) \CPPic{chromR2-2n} \normaltext{\quad and \quad} \CPPic{chromR3-1n} = 0.
\end{equation}}

\begin{definition} \label{chromatic definition} \cite{FK}
The {\em chromatic algebra} in degree $n$, ${\mathcal C}_n$, is an algebra
over ${\mathbb Z}[q]$ which is defined as the quotient of the free graph
algebra ${\mathcal F}_n$ by the ideal $I_n$ generated by the relations
(\ref{chrom relation 1}, \ref{chrom relation 2}) above. Set ${\mathcal
  C}=\cup_n {\mathcal C}_n$.
\end{definition}

The trace, $\tr_{\chi}\co {\mathcal C}\longrightarrow{\mathbb Z}[q]$ is defined
on the additive generators (graphs $G$ in the rectangle $R$) by connecting
the top and bottom endpoints of $G$ by disjoint arcs in complement of $R$
the plane (denote the result by $\overline G$) and evaluating the chromatic
polynomial of the dual graph $\widehat{\overline G}$:

\begin{equation*} \label{chromatic trace}
\tr_{\chi}(G)\; \, =\; \, (q+q^{-1})^{-2}\cdot {\chi}_{\widehat{\overline G}}((q+q^{-1})^2).
\end{equation*}

\subsection{Relations between the diagrammatic algebras} \label{relations}

This section recalls trace-preserving homomorphisms between the $\SO(3)$ BMW, chromatic, and Temperley-Lieb algebras.
A categorified version is given in sections \ref{sobmwcat}, \ref{chromatic cat} below.

\begin{definition} \label{BMW to chrom}
The formulas (introduced in \cite{KV})
$$
\CPic{n2-cross} \mapsto q^{-2} \CPic{n2-1} - \CPic{n2-quad} +\; q^{2}
\CPic{n2-s} \normaltext{\quad and \quad} \CPic{n2-cross-2} \mapsto q^{2}
\CPic{n2-1} - \CPic{n2-quad} +\; q^{-2} \CPic{n2-s}
$$
define a homomorphism of algebras $i\co \BMW_n\longrightarrow {\mathcal
  C}_n$ over ${\mathbb Z}[q,q^{-1}]$, see theorem 5.1 in \cite{FK} (see also \cite{FR}).
  \end{definition}

\begin{definition}    \label{def:phi}
Define a homomorphism ${\phi}\co {\mathcal F}_n\longrightarrow \TL_{2n}$ on
the additive generators (graphs in a rectangle) of the free graph algebra
${\mathcal F}_n$ by replacing each edge with the second Jones-Wenzl
projector $P_2$, and
resolving each vertex as shown in the figure below:
$$\CPic{line}\hspace{-.2in} \mapsto \CPic{p2box} = \CPic{n2-1} - \frac{1}{q+q^{-1}} \CPic{n2-s} \textnormal{\quad and \quad} \CPic{fourvalent} \mapsto (q+q^{-1})\cdot \CPic{fourvalentexp}\quad .$$

The factor in the definition of ${\phi}$ corresponding to an $r$-valent
vertex is $(q+q^{-1})^{(r-2)/2}$, so for example it equals $q+q^{-1}$ for the $4$-valent
vertex in the figure above. The overall factor for a graph $G$ is the
product of the factors $(q+q^{-1})^{(r(V)-2)/2}$ over all vertices $V$ of $G$.
\end{definition}

Therefore ${\phi}(G)$ is a sum of $2^{E(G)}$ terms, where $E(G)$ is the
number of edges of $G$. It is shown in lemmas 6.2 and 6.4 in \cite{FK} that
${\phi}$ induces a well-defined homomorphism of algebras ${\mathcal C}_n\longrightarrow \TL_{2n}$. Moreover,
\begin{equation*} \label{chromatic dual}
\tr_\chi(G)\, =\, \tr_{\TL} ({\phi}(G)).\end{equation*} Phrased differently,
up to a renormalization factor $(q+q^{-1})^{-2}$ the chromatic polynomial of
a planar graph may be computed as the Yamada polynomial \cite{Y} of the dual
graph, that is the evaluation of the quantum spin network where each edge is
labeled with the second projector. The following lemma summarizes the above
discussion:

\begin{lemma}  \label{relations lemma}
The homomorphisms $i, {\phi}$ are trace-preserving, in other words the following diagram commutes:
{\Small
\begin{diagram}
\BMW_n & \rTo^i &  \mathcal{C}_n &  \rTo^{\phi} &  \TL_{2n} \\
       & \rdTo  & \dTo & \ldTo  & \\
       &        & \mathbb{Z}[q,q^{-1}] &       & 
\end{diagram}}
\end{lemma}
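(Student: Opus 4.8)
The plan is to handle the two halves of the diagram separately. The right-hand triangle, i.e. commutativity of $\mathcal{C}_n \to \TL_{2n} \to \Z[q,q^{-1}]$, or equivalently $\tr_{\TL}\circ\phi = \tr_{\chi}$, is exactly the identity $\tr_{\chi}(G)=\tr_{\TL}(\phi(G))$ recorded just above the statement, which is lemmas 6.2 and 6.4 of \cite{FK}; nothing further is required there. So the real content is the left-hand triangle, $\tr_{\chi}\circ i=\tr_K$.

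For this, the first point is that $\tr_K$ and $\tr_{\chi}$ are both Markov-type traces: each is computed by taking an additive generator in the rectangle, closing it up by joining top to bottom endpoints with a fixed system of disjoint arcs, and then evaluating an invariant of the closed picture --- the $\SO(3)$ Kauffman polynomial in one case, $(q+q^{-1})^{-2}\,\chi_{\widehat{\overline{(\cdot)}}}((q+q^{-1})^2)$ in the other. The homomorphism $i$ of Definition \ref{BMW to chrom} is given by a local substitution performed independently at each crossing, so it commutes with the closure operation: the closure of $i(T)$ in $\mathcal{C}_n$ is the same formal sum of planar graphs as the image under $i$ of the closure of $T$. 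Hence it suffices to treat the case $n=0$, namely to show that for every framed link $L$ (presented as a closed tangle) the $\SO(3)$ Kauffman polynomial of $L$ equals $(q+q^{-1})^{-2}$ times the chromatic polynomial, evaluated at $Q=(q+q^{-1})^2$, of the planar dual of the formal sum of $4$-valent graphs obtained by resolving every crossing of $L$ by the rule of Definition \ref{BMW to chrom}.

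This reduced identity I would prove by the skein-theoretic computation that already underlies the well-definedness of $i$ (theorem 5.1 of \cite{FK}, following \cite{KV}): checking crossing by crossing that $i$ carries each defining relation of $\BMW_n$ to an identity valid in $\mathcal{C}_n$ modulo the chromatic relations (\ref{chrom relation 1}) and (\ref{chrom relation 2}). The two-term crossing relation corresponds to contraction--deletion (\ref{chrom relation 1}); the framing relations, whose coefficients are $q^{\pm4}$, correspond after dualizing to the loop and $1$-valent-vertex relations (\ref{chrom relation 2}), the scalar $q^2+1+q^{-2}=Q-1$ being exactly the factor by which the chromatic polynomial changes under deletion of a pendant edge; and the $\SO(3)$ value of the unknot agrees with $\tr_{\chi}$ of the closed single strand, both equal to $q^2+1+q^{-2}$. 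Summing over the $2^c$ resolutions of the $c$ crossings and matching the prefactors then gives the identity, and re-closing yields $\tr_{\chi}(i(T))=\tr_K(T)$ for all $T\in\BMW_n$.

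The obstacle is bookkeeping rather than anything conceptual: $i$ introduces $4$-valent vertices, whose $\phi$-weights $(q+q^{-1})^{(r-2)/2}$ together with the overall factor $(q+q^{-1})^{-2}$ in $\tr_{\chi}$ must be matched against the $\SO(3)$ Kauffman normalization --- in particular against its writhe-type $q^{\pm4}$ contributions --- and this matching has to be maintained through the passage to the planar dual, where faces of the resolved diagram become vertices. Arranging the argument so that it reduces transparently to the one-crossing case, the one-curl case, and the unknot, rather than to an unwieldy global enumeration, is where care is needed; once the local compatibility of $i$ with the $\SO(3)$ Kauffman relations is granted, the trace identity follows.
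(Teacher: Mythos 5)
The paper itself contains no argument for this lemma beyond the sentences preceding it: the statement is offered as a ``summary of the above discussion,'' i.e.\ it is imported from \cite{FK}, the $\phi$-triangle being exactly lemmas 6.2 and 6.4 there and the $i$-triangle being the content of theorem 5.1 there (following \cite{KV}). So your right-hand triangle is handled exactly as in the paper, and your left-hand triangle is in effect a reconstruction of the argument the paper delegates to \cite{FK}; the reduction to closed diagrams via locality of $i$ is fine. Two points to tighten. First, checking that $i$ respects the $\SO(3)$ Kauffman relations modulo (\ref{chrom relation 1})--(\ref{chrom relation 2}) gives well-definedness, but the trace identity also needs the standard determination step: the $\SO(3)$ Kauffman polynomial is not a state sum over the resolutions you enumerate but is characterized as the regular-isotopy invariant satisfying the skein and framing relations with unknot value $q^{2}+1+q^{-2}$, so the argument should say explicitly that $L\mapsto\tr_{\chi}(i(L))$ satisfies these relations and normalization and then invoke uniqueness --- your phrase ``once the local compatibility is granted, the identity follows'' is precisely this step. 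Second, two bookkeeping slips: resolving $c$ crossings by definition \ref{BMW to chrom} produces $3^{c}$, not $2^{c}$, terms, since each crossing maps to a three-term combination; and the framing coefficient is not the loop scalar $Q-1$ by itself --- closing a curl on the resolved crossing and using the loop relation together with removal of the resulting $2$-valent vertex (contraction--deletion plus the $1$-valent relation) gives $(q^{\mp2}-1)(q^{2}+1+q^{-2})+q^{\pm2}=q^{\mp4}$, which is how the $q^{\pm4}$ of the framing relations actually arises. With these adjustments your outline is correct, and it amounts to the proof in \cite{FK} that the paper cites rather than reproves.
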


\section{Categorification of the Temperley-Lieb algebra} \label{categorified TL section}

In this section we recall Dror Bar-Natan's graphical formulation
\cite{DBN} of Khovanov's categorification of the Temperley-Lieb
algebra \cite{KH}.

There is an additive category $\PCob(n)$ whose objects are isotopy classes
of formally $q$-graded Temperley-Lieb diagrams with $2n$ boundary
points. The morphisms are given by the free $\mathbb{Z}$-module spanned by
isotopy classes of orientable cobordisms bounded in $\mathbb{R}^3$ between
two planes containing such diagrams. If $\chi(S)$ is the Euler
characteristic of a surface $S$, then a cobordism $C : q^i A \to q^j B$ has
\emph{degree} given by

$$|C| = \chi(C) - n + j-i .$$

It has become a common notational shorthand to represent a handle by a dot
and a saddle by a flattened diagram containing a dark line:
$$\CPic{sheetwithhandle} = 2 \CPic{sheetwithdot} = 2 \CPic{dottedline} \textnormal{\quad and\quad } \CPic{saddle} = \CPic{n2-1-sad}.$$

There are maps from a circle to the empty set and vice versa given by a
punctured sphere and a punctured torus

$$\begin{diagram}
\varphi:\hspace{-.25in} &\CPic{circle}\hspace{.1in} & \pile{\rTo^{\left( \CPic{cap-circ-to-null} \CPic{torus-circ-to-null} \right)  }\\
\lTo_{\left( \CPic{torus-null-to-circ} \CPic{cap-null-to-circ} \right)  }}
& \hspace{.1in} q^{-1}\, \emptyset\,\, \oplus\,\, q\, \emptyset &\hspace{.1in} :  \psi & .
\end{diagram}
$$

In order to obtain $\varphi \circ \psi = 1$ and $\psi \circ \varphi = 1$ we
form a new category $\Cob(n) = \Cob^3_{\cdot/l}(n)$ obtained as a quotient
of the category $\PCob(n)$ by the relations given below.
 $$\CPic{sphere} = 0 \hspace{.75in} \CPic{spheredot} = 1 \hspace{.75in} \CPic{spheredotdot} = 0 \hspace{.75in} \CPic{spheredotdotdot} = \alpha$$
 $$\CPic{cyl} = \CPic{cut1} + \CPic{cut2}$$

The cylinder or neck cutting relation implies that closed surfaces
$\Sigma_g$ of genus $g > 3$ must evaluate to $0$. In what follows we will
let $\alpha$ be a free variable and absorb it into our base ring ($\Sigma_3=
8\alpha$). One can think of $\alpha$ as a deformation parameter.

In this categorification the skein relation becomes
$$\begin{diagram} \CPic{n2-orcross} \hspace{-.3in} & = & q \CPic{n2-1-u}\hspace{-.15in} &\rTo^{\!\MPic{n2-1-sad}} & \hspace{.2in} q^{2} \CPic{n2-s}\hspace{-.15in} & \normaltext{\quad\,\, and } &
\CPic{n2-orcross-2}\hspace{-.3in} & = & q^{-2} \CPic{n2-s} & \rTo^{\!\MPic{n2-1-sad}} & \hspace{.2in} q^{-1}\CPic{n2-1-u}
\end{diagram}
$$

where the underlined diagram represents homological degree 0.

\begin{definition}
  Let $\Kom(n) = \Kom(\Mat(\Cob^3_{\cdot/l}(n)))$ be the category of chain
  complexes of formal direct sums of objects in $\Cob^3_{\cdot/l}(n)$.
\end{definition}

The skein relation allows us to associate to any tangle diagram $D$ with
$2n$ boundary points an object in $\Kom(n)$.

Given two objects $C,D \in \Kom(n)$ we will use $C\otimes D$ to denote the
categorified Temperley-Lieb multiplication $\otimes : \Kom(n) \otimes
\Kom(n) \to \Kom(n)$ obtained by gluing all diagrams and morphisms along
the $n$ boundary points and $n$ boundary intervals respectively.

\subsection{Chain Homotopy Lemmas} \label{homotopy lemmas}

We will make frequent use of the following standard lemma in this paper,

\newcommand{\matot}[2]{\ensuremath{\left(\begin{array}{c} #1 \\ #2 \end{array}\right)}}
\newcommand{\matto}[2]{\ensuremath{\left(\begin{array}{cc} #1 & #2 \end{array}\right)}}
\newcommand{\mattt}[4]{\ensuremath{\left(\begin{array}{cc} #1 & #2\\ #3 & #4 \end{array}\right)}}

\begin{lemma}{(Gaussian Elimination, \cite{DrorFast})} \label{gaussian elimination}
  Let $K_*$ be a chain complex in an additive category $\mathcal A$
  containing a summand of the form given below:

$$\begin{diagram}
A &\rTo^{\matot{\cdot}{\delta}} & B\oplus C & \rTo^{\mattt{\varphi}{\lambda}{\mu}{\eta}} & D\oplus E & \rTo^{\matto{\cdot}{\epsilon}} & F
\end{diagram}$$

Then if $\varphi : B \to D$ is an isomorphism there is a homotopy
equivalence from $K_*$ to a smaller complex containing the summand below
obtained by removing $B$ and $D$ terms via $\varphi$:
$$\begin{diagram}
A &\rTo^\delta & C & \rTo^{\eta - \mu\varphi^{-1}\lambda} & E & \rTo^{\epsilon} & F
\end{diagram}$$
\end{lemma}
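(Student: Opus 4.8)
The statement is that if a chain complex $K_*$ contains a 3-term subcomplex $A \to B\oplus C \to D\oplus E \to F$ in which the component $\varphi\colon B\to D$ is an isomorphism, then $K_*$ is homotopy equivalent to the complex with $B$ and $D$ deleted and the map $C\to E$ corrected by the Schur complement $\eta - \mu\varphi^{-1}\lambda$.

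The plan is to exhibit explicit chain maps in both directions and explicit homotopies, and then verify the four identities (two compositions equal identity up to homotopy, and the maps are genuine chain maps). First I would set up notation: write the relevant part of $K_*$ as the given diagram, keeping the off-diagonal morphisms $\delta\colon A\to C$, $\epsilon\colon E\to F$ and the $2\times 2$ block $\left(\begin{smallmatrix}\varphi & \lambda\\ \mu & \eta\end{smallmatrix}\right)\colon B\oplus C\to D\oplus E$, and I would verify at the outset that the hypothesis $d^2=0$ forces the compatibility relations $\varphi\cdot(\text{incoming to }B)=0$, etc., which will be used to check that the truncated differential still squares to zero. The candidate projection $\pi\colon K_*\to K'_*$ would be the identity on all terms except: it is the identity on $C$, zero on $B$; on $D\oplus E$ it is the matrix $\left(\begin{smallmatrix}-\mu\varphi^{-1} & 1\end{smallmatrix}\right)$ landing in $E$; and it sends $D$ to the appropriate correction so that the triangle commutes. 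The candidate inclusion $\iota\colon K'_*\to K_*$ would be the identity except on $C$, where it is $\left(\begin{smallmatrix}-\varphi^{-1}\lambda\\ 1\end{smallmatrix}\right)\colon C\to B\oplus C$.

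Next I would check that $\pi$ and $\iota$ are chain maps; this is where the $d^2=0$ relations get used, and it is a direct block-matrix computation. Then $\pi\circ\iota = \Id_{K'_*}$ on the nose — the Schur complement $\eta-\mu\varphi^{-1}\lambda$ appears precisely as the induced differential $C\to E$ under this composition. The remaining identity, $\iota\circ\pi \simeq \Id_{K_*}$, requires a homotopy $h$; the natural choice is $h = 0$ except $h\colon D\to B$ equal to $\varphi^{-1}$ (and possibly a second component $D\to$ the term before $B$ forced by compatibility, but in the displayed local picture just $\varphi^{-1}$ on the $B$–$D$ slot suffices locally). One then checks $\Id - \iota\pi = dh + hd$ by comparing block entries term by term.

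The main obstacle is bookkeeping rather than conceptual difficulty: one must be careful that the lemma is stated for a \emph{summand} of $K_*$, so the maps $A\to B\oplus C$ and $D\oplus E\to F$ may have further components into or out of parts of $K_*$ not displayed, and the homotopy/chain-map identities must be checked globally, not just in the three displayed spots. Concretely, the hypothesis is that $K_*$ splits as a direct sum of additive objects so that the differential, restricted appropriately, has the displayed shape; the verification that $dh+hd$ and the chain-map conditions hold then only involves the displayed arrows plus the observation that all other components of the differential commute with the modification because $h$ is supported entirely on the $D\to B$ slot. Once that is organized, every identity reduces to a one-line $2\times 2$ matrix manipulation, so I would present the four verifications compactly and leave the routine entries to the reader.
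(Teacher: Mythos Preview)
Your proposal is correct and is exactly the standard argument: define the projection $\pi$ and inclusion $\iota$ as you describe, take the single nonzero homotopy component $h=\varphi^{-1}\colon D\to B$, and verify the four identities by block-matrix computation using the relations forced by $d^2=0$. Note that the paper itself does not supply a proof of this lemma; it simply quotes the statement from Bar-Natan's \emph{Fast Khovanov Homology Computations}, and your argument is essentially the one given there.
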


\smallskip

The following result is a direct generalization which will be very useful in our context.

\smallskip

\begin{lemma}{(Simultaneous Gaussian Elimination, \cite{CK})} \label{sim gaussian elimination}
Let $K_*$ be a chain complex in an additive category $\mathcal A$ of the form

$$K_* = \begin{diagram} A_0\oplus C_0 & \rTo^{M_0} & A_1\oplus B_1 \oplus C_1 & \rTo^{M_1} &
  A_2 \oplus B_2 \oplus C_2 & \rTo^{M_2} &  \cdots
\end{diagram}$$

where

$$M_0 =
\left(\begin{array}{cc}
a_0 & c_0\\
d_0 & f_0\\
g_0 & j_0
\end{array}\right) \quad \textnormal{   and   } \quad M_i =
\left(\begin{array}{ccc}
a_i & b_i & c_i\\
d_i & e_i & f_i\\
g_i & h_i & j_i
\end{array}\right) \textnormal{ for all $i > 0$ }
$$

If $a_{2i} : A_{2i} \to A_{2i+1}$ and $e_{2i+1} : B_{2i+1} \to B_{2i+2}$ are
isomorphisms for $i \geq 0$ then the chain complex $K_*$ is homotopy
equivalent to the smaller chain complex $D_*$ obtained by removing all $A_i$
and $B_i$ terms via the isomorphisms $a_{2i}$ and $e_{2i+1}$:

$$D_* = \begin{diagram}
C_0 & \rTo^{q_0} & C_1 & \rTo^{q_1} & C_2 &\rTo^{q_2} & C_3 & \rTo^{q_3} & \cdots
\end{diagram}
$$

where $q_{2i} = j_{2i} - g_{2i} a_{2i}^{-1} c_{2i}$ and $q_{2i+1} = j_{2i+1} - h_{2i+1} e_{2i+1}^{-1} f_{2i+1}$.

\end{lemma}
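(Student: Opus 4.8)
The plan is to reduce the Simultaneous Gaussian Elimination lemma to the ordinary Gaussian Elimination lemma (Lemma \ref{gaussian elimination}) by an inductive, "one cancellation at a time" argument, taking care that the required isomorphisms at later stages are not disturbed by earlier cancellations. First I would observe that, because the $A_i$ appear only in consecutive pairs linked by the isomorphisms $a_{2i}$ and the $B_i$ only in consecutive pairs linked by $e_{2i+1}$, the single-step hypothesis of Lemma \ref{gaussian elimination} is met at each stage: for the pair $(A_{2i},A_{2i+1})$ one reads off the relevant $3\times 3$ (really $2\times 2$ after grouping) block of $M_{2i-1}, M_{2i}, M_{2i+1}$ and checks that the map $A_{2i}\to A_{2i+1}$ induced by $a_{2i}$ is the isomorphism $\varphi$ needed there; similarly for each $(B_{2i+1},B_{2i+2})$ pair with $e_{2i+1}$.

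The main point to nail down is the \emph{stability of the remaining isomorphisms under cancellation}. When we apply Lemma \ref{gaussian elimination} to remove the pair $(A_{2i},A_{2i+1})$, the differentials on neighboring terms get modified by the correction term $\eta-\mu\varphi^{-1}\lambda$; I must check that this modification does not destroy the isomorphisms $e_{2i\pm 1}$ (or any other $a_{2j}$, $e_{2j+1}$) that we still intend to use. The key structural observation is that the $A$-cancellations and the $B$-cancellations "do not interact": an $A_{2i}$ lives in even homological degree and its partner $A_{2i+1}$ in odd degree, while the $B$'s that are cancelled via $e_{2i+1}$ sit in odd and even degree respectively but are cancelled along a disjoint set of terms; the correction term from cancelling an $A$-pair only affects the $C$-components and possibly the $B$-components of the adjacent differentials, but the submatrix entry governing the relevant $e$ is unaffected because $a$, $c$, $g$ (the entries contracted) land in or emanate from $A$-summands, which are disjoint from the $B$-summands. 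So the $e_{2i+1}$ blocks survive verbatim, and conversely the $a_{2i}$ blocks survive the $B$-cancellations. This lets me perform \emph{all} $A$-cancellations first (in any order — say, left to right), obtaining an intermediate complex with only $B$'s and $C$'s and with the same $e_{2i+1}$ still isomorphisms, and then perform all $B$-cancellations.

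The remaining step is \emph{bookkeeping of the resulting differentials}. After removing all $A_i$, Gaussian elimination gives new $B$-to-$B$, $B$-to-$C$, $C$-to-$B$, $C$-to-$C$ components; in particular the new $C_{2i}\to C_{2i+1}$ component becomes $j_{2i}-g_{2i}a_{2i}^{-1}c_{2i}$, which is exactly the claimed $q_{2i}$, and this is unaffected by any subsequent $B$-cancellation since it connects two $C$ terms across a degree at which no $B$ is cancelled there. Then removing all $B_{2i+1}$ via $e_{2i+1}$ produces the new $C_{2i+1}\to C_{2i+2}$ component $j_{2i+1}-h_{2i+1}e_{2i+1}^{-1}f_{2i+1}=q_{2i+1}$. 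A small care point: when cancelling $B_{2i+1}$, the entries $h_{2i+1}, f_{2i+1}$ might \emph{a priori} have been altered by the earlier $A$-cancellations; I would check that they are not, again because those entries involve only $B$- and $C$-summands and the $A$-corrections touch only terms incident to $A$-summands. Composing the two homotopy equivalences (which compose because homotopy equivalence is transitive in $\Kom(\Mat(\mathcal A))$) yields $K_*\simeq D_*$ with the stated differentials.

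The hardest part will be making the "non-interaction" claim fully precise without drowning in indices: one genuinely has to verify, at the level of the block matrices $M_{2i-1}, M_{2i}, M_{2i+1}$ and their neighbors, that the correction terms introduced by a given cancellation are supported away from the matrix entries needed for every later cancellation. Once that is set up cleanly — essentially by noting that the $A$-summands and $B$-summands are mutually disjoint direct summands and that Gaussian elimination only modifies entries between summands adjacent to the cancelled pair — the rest is a finite (or countable, taken degree by degree) iteration of Lemma \ref{gaussian elimination}.
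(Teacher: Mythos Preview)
Your approach is correct and is exactly the natural way to prove this. Note, however, that the paper does not actually give a proof of this lemma: it is stated with a citation to \cite{CK} and introduced only as ``a direct generalization'' of Lemma~\ref{gaussian elimination}, so there is nothing to compare against in the paper itself.

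One small sharpening of your bookkeeping: when you cancel the pair $(A_{2i},A_{2i+1})$ via $a_{2i}$, the correction term \emph{does} touch a $B$--$B$ entry, namely the $(B_{2i}\to B_{2i+1})$ component $e_{2i}$ becomes $e_{2i}-d_{2i}a_{2i}^{-1}b_{2i}$. The point is not that no $B$-entries are modified, but that the modified entry $e_{2i}$ is never one of the isomorphisms you need --- the hypotheses only concern $e_{2i\pm 1}$, which sit in $M_{2i\pm 1}$ and are genuinely untouched since Gaussian elimination along $a_{2i}$ only alters the middle differential $M_{2i}$. With that correction your non-interaction argument goes through cleanly: after all $A$-cancellations the odd-indexed matrices $M_{2i+1}$ survive with their $e_{2i+1},f_{2i+1},h_{2i+1},j_{2i+1}$ entries intact, and the $C$--$C$ entry of the even-indexed matrices has already become $q_{2i}=j_{2i}-g_{2i}a_{2i}^{-1}c_{2i}$; the subsequent $B$-cancellations then produce $q_{2i+1}=j_{2i+1}-h_{2i+1}e_{2i+1}^{-1}f_{2i+1}$ and leave the $q_{2i}$ alone, exactly as claimed.
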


\section{Construction of the second projector} \label{formulasec}

In this section we define a chain complex $P_2 \in \Kom(2)$ which
categorifies the second Jones-Wenzl projector (definition \ref{jwproj
  tl}). This construction of $P_2$ is universal and unique up to homotopy
\cite{CK}. (Other definitions were obtained in
\cite{FSS} and \cite{Rozansky}).

\subsection{The Second Projector Revisited} \label{second projector section}

The second projector is defined to be the chain complex
$$\begin{diagram}
\CPic{p2box}\! & = & \!\CPic{n2-1} & \rTo^{\MPic{n2-1-sad}} &q \CPic{n2-s} &\rTo^{\MPic{n2-tops} \!\!\!- \MPic{n2-bots}} &q^{3} \CPic{n2-s} &
\rTo^{\MPic{n2-tops}\!\!\! + \MPic{n2-bots}} & q^{5} \CPic{n2-s} &  \cdots
\end{diagram}$$

in which the last two maps alternate ad infinitum. More explicitly,
$$P_2 = (C_*, d_*), $$

the chain groups are given by
$$
C_n =
\left\{
\begin{array}{lr}
q^0 \MPic{n2-1}&n = 0\\
q^{2n-1} \MPic{n2-s}&n > 0
\end{array}
\right .
$$

and the differential is given by

$$
d_n =
\left\{
\begin{array}{llr}
\MPic{n2-1-sad}                 & : \MPic{n2-1}\!\! \to q \MPic{n2-s}   & n = 0\\
\MPic{n2-tops} \!\!\! + \MPic{n2-bots} & :   q^{4k-1} \MPic{n2-s}\!\!  \to q^{4k +1} \MPic{n2-s}                    &n \ne 0, n = 2k\\
\MPic{n2-tops} \!\!\! - \MPic{n2-bots} & : q^{4k +1} \MPic{n2-s}\!\!  \to q^{4k+3} \MPic{n2-s}                    & n = 2k+1\,.
\end{array}
\right .
$$

\begin{proposition}
$P_2$ defined above is a chain complex, that is successive compositions of the differential are equal to zero.
\end{proposition}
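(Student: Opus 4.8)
The plan is to check directly that successive differentials compose to zero, i.e. that $d_{n+1}\circ d_n=0$ for every $n\ge 0$, separating the single saddle map $d_0$ from the infinite tail $d_1,d_2,\dots$ of dotted maps. First I would isolate the two local facts about $\Cob^3_{\cdot/l}$ that do all the work. \emph{(Dot migration.)} A dot may be slid freely along any connected component of a cobordism: this is an isotopy of the cobordism when the component is a single sheet, and follows from the neck-cutting relation where a component is assembled from several local pieces. \emph{(Two dots.)} Two dots lying on one connected component equal $\alpha$ times the cobordism with those two dots erased; this is forced by the sphere evaluations, since if two dots on a sheet were universally $\lambda$ times the undotted sheet, then capping off and using that a once-dotted sphere equals $1$ while a thrice-dotted sphere equals $\alpha$ would give $\lambda=\alpha$. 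In the deformed-Frobenius picture these simply say that a dot is central and squares to the scalar $\alpha$.

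Next I would dispose of the tail. For $n\ge 1$ the object $C_n$ is a copy of $\CPic{n2-s}$, whose identity cobordism is a pair of disjoint disks — the \emph{upper} and \emph{lower} sheets — and $d_n$ is this identity decorated by $t+(-1)^n b$, where $t$ (resp.\ $b$) denotes a single dot on the upper (resp.\ lower) sheet. Because the two sheets are disjoint, $t$ and $b$ commute, and by the Two-dots fact $t^2=b^2=\alpha\cdot\mathrm{id}$ (here $t^2$ means $t\circ t$, etc.). Consecutive differentials carry opposite signs on the $b$-term, so
\[
d_{n+1}\circ d_n=\bigl(t+(-1)^{n+1}b\bigr)\circ\bigl(t+(-1)^{n}b\bigr)=t^{2}-b^{2}+(-1)^{n}(tb-bt)=\alpha\cdot\mathrm{id}-\alpha\cdot\mathrm{id}=0 .
\]
Along the way I would note that the $q$-gradings are consistent — for each $n$ the codomain of $d_n$ equals the domain of $d_{n+1}$ — and a brief Euler-characteristic count confirms that every $d_n$ is a degree-$0$ morphism, so these genuinely are differentials of a complex of graded objects.

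It remains to treat $d_1\circ d_0$. Here $d_0$ is the saddle cobordism $C_0=\CPic{n2-1}\to q\,\CPic{n2-s}$, which is a single connected surface (a disk), and $d_1=t-b$. Composing, $t\circ d_0$ is the saddle carrying one extra dot near the upper arc and $b\circ d_0$ the same saddle with that dot near the lower arc; since the saddle is connected, Dot migration identifies these two cobordisms, so $d_1\circ d_0=t\circ d_0-b\circ d_0=0$.

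The computation itself is immediate once the two local facts are available, so the real work — and the step most deserving of care — is the bookkeeping: correctly deriving the Two-dots relation from the neck-cutting and sphere relations of $\Cob^3_{\cdot/l}$ (in particular pinning its constant to $\alpha$, not $0$), together with the Dot-migration relation; observing that the saddle is connected whereas the two sheets appearing in the tail are not; and verifying that the signs alternating in the definition of $d_*$ are exactly those that make the cross terms cancel. None of this is deep, but it is where all the content lies.
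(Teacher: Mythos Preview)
Your proof is correct and follows essentially the same approach as the paper: both split into the case $d_1\circ d_0$ (handled by observing the saddle is connected, so the two dotted saddles coincide) and the tail $d_{n+1}\circ d_n$ for $n\ge 1$ (handled by $t^2=b^2=\alpha\cdot\mathrm{id}$ and $tb=bt$). Your write-up additionally spells out the provenance of the dot relations and checks degrees, which the paper leaves implicit; one small remark is that your argument for ``two dots $=\alpha$'' only determines the constant assuming the relation has the form $\lambda\cdot\mathrm{id}$, whereas existence of such a relation comes from neck-cutting a small circle around the two dots and evaluating the resulting closed spheres.
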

\begin{proof}
Since $d_{2n+1} \circ d_{2n} = d_{2n} \circ d_{2n-1}$ there are only two cases:
\begin{eqnarray*}
d_1\circ d_0 &= & \MPic{n2-s-tops}\!\!\! - \MPic{n2-s-bots} \\
&= & \MPic{n2-s-bots} \!\!\!- \MPic{n2-s-bots} =  0\\
& \normaltext{ and } & \\
 d_{2n+1} \circ d_{2n} &= & (\MPic{n2-tops} \!\!\! + \MPic{n2-bots})\circ (\MPic{n2-tops}\!\!\! - \MPic{n2-bots}) \\
&=& \MPic{n2-tops-2} \!\!\! + \MPic{n2-mid-2}\!\!\! - \MPic{n2-mid-2} \!\!\! - \MPic{n2-bots-2} \\
&=& \alpha \MPic{n2-s} \!\!\! + 0 - \alpha \MPic{n2-s} = 0 .\\
\end{eqnarray*}
\end{proof}

\begin{theorem}{(\cite{CK})}\label{secondproj thm}
  The chain complex $P_2 \in \Kom(2)$ defined above is contractible ``under
  turnback'' and a homotopy idempotent. Graphically,
$$\CPPic{p2capped} \simeq 0 \textnormal{\quad\quad and\quad\quad } \CPPic{p2boxbox} \simeq \CPPic{p2box}. $$

Algebraically, these are the relations

$$P_2 \otimes e_1 \simeq 0 \simeq e_1 \otimes P_2 \normaltext{\quad and \quad} P_2 \otimes P_2 \simeq P_2. $$

\end{theorem}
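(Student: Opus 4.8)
**

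The plan is to establish the two homotopy equivalences separately, in each case by applying the Simultaneous Gaussian Elimination lemma (Lemma \ref{sim gaussian elimination}) to cancel all but finitely many terms of an explicit complex. The key technical inputs throughout are the neck-cutting (cylinder) relation, the sphere evaluations, and the dot/handle identities in $\Cob^3_{\cdot/l}(2)$; these are what make the relevant cobordism maps into isomorphisms after a change of basis.

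\textbf{Turnback contractibility.} First I would compute $P_2 \otimes e_1$. Since $e_1$ is the single cup-cap diagram, capping off $P_2$ replaces $\MPic{n2-1}$ by $e_1$ and each $\MPic{n2-s}$ (for $n>0$) by $e_1$ with an extra disjoint circle, i.e.\ by $(q+q^{-1})\, e_1$ after using the circle relation $\CPic{cyl}=\CPic{cut1}+\CPic{cut2}$ to split the circle into a dotted and an undotted copy. Concretely, capping the saddle $\MPic{n2-1-sad}$ gives an isomorphism from $e_1$ onto one of the two summands of $(q^{-1}\oplus q)e_1$ coming from the closed circle, and capping each of the maps $\MPic{n2-tops}\pm\MPic{n2-bots}$ produces, via neck-cutting and the sphere relations $\CPic{sphere}=0$, $\CPic{spheredot}=1$, a map between these $\oplus$-decomposed copies of $e_1$ one of whose matrix entries is $\pm 1$ (an isomorphism) and one of which vanishes. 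This is exactly the shape required by Lemma \ref{sim gaussian elimination}: after organizing the capped complex so that the isomorphism entries sit in the $A_{2i}$ and $B_{2i+1}$ slots, all terms cancel in pairs and the residual complex $D_*$ is zero. The only subtlety is keeping careful track of signs and of which of the two circle-summands survives at each stage; I expect the alternating $+/-$ in the differential of $P_2$ to be precisely what guarantees the surviving off-diagonal entries never simultaneously vanish, so the cancellation propagates all the way up.

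\textbf{Idempotence.} For $P_2\otimes P_2$ I would expand the tensor square as the total complex of the bicomplex $C_i\otimes C_j$. The $(0,0)$ term is $\MPic{n2-1}$; every term with $i>0$ or $j>0$ involves a vertical composite of two saddles along the same pair of strands, which by neck-cutting equals a sum of a dotted and an undotted $\MPic{n2-s}$ (possibly with a closed circle, handled as above). The strategy is then to peel off, row by row and column by column, copies of the trivial complex $(\MPic{n2-s}\xrightarrow{\pm 1}\MPic{n2-s})$: the composite-saddle relation $\CPic{saddle}$ composed with itself, together with $\MPic{n2-tops}-\MPic{n2-bots}$ acting as a genuine saddle and $\MPic{n2-tops}+\MPic{n2-bots}$ acting (after neck-cutting) as multiplication by a dotted identity, produces a staircase of isomorphisms. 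Applying Lemma \ref{sim gaussian elimination} to this staircase removes all the redundant $\MPic{n2-s}$ terms except one surviving copy in each homological degree $n>0$, and the induced differential $q_n = j_n - (\cdots)$ is readily checked to reproduce exactly the $\pm(\MPic{n2-tops}\pm\MPic{n2-bots})$ of $P_2$ — here one uses $\Sigma_3 = 8\alpha$ and the genus-$\geq 4$ vanishing to see that the correction terms collapse correctly. Matching the $q$-gradings ($q^{2i-1}\cdot q^{2j-1}$ against $q^{2n-1}$ with $n=i+j-\text{(shift from the eliminated saddle)}$) confirms the surviving complex is isomorphic to $P_2$.

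\textbf{Main obstacle.} The genuinely delicate part is the bookkeeping in the idempotence argument: organizing the double complex into the form demanded by Lemma \ref{sim gaussian elimination} so that the hypotheses ``$a_{2i}$ and $e_{2i+1}$ are isomorphisms'' hold with the correct parities, and then verifying that the resulting induced maps $q_n$ are not merely isomorphic to the $P_2$ differentials up to homotopy but literally the alternating saddle sums — including all signs. I would isolate this in a lemma computing the composite of two saddles and the action of $\MPic{n2-tops}\pm\MPic{n2-bots}$ after neck-cutting, and then feed it mechanically into the elimination lemma; the turnback statement is comparatively routine once the capped differential entries are identified.
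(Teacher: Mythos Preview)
Your turnback argument is essentially the paper's: cap $P_2$, deloop the resulting circles, and apply Simultaneous Gaussian Elimination using the identity entries that appear in the delooped differential matrices. Fine.

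For idempotence, however, you are working much harder than necessary, and the bookkeeping you flag as the ``main obstacle'' is entirely avoidable. You expand \emph{both} copies of $P_2$ into the full bicomplex $C_i\otimes C_j$ and then try to cancel terms by analyzing composites of saddles, neck-cutting, and dotted identities. The paper instead expands only \emph{one} of the two projectors. Concretely: write $P_2\otimes P_2$ as the total complex of the sequence
\[
(1\otimes P_2)\;\longrightarrow\; q(e_1\otimes P_2)\;\longrightarrow\; q^3(e_1\otimes P_2)\;\longrightarrow\;\cdots
\]
obtained by tensoring each chain object of the top $P_2$ with the bottom $P_2$. The degree-$0$ term is $1\otimes P_2 = P_2$, while every term in positive degree is a shift of $e_1\otimes P_2$, which you have just shown is contractible. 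Now Lemma~\ref{sim gaussian elimination} (or iterated ordinary Gaussian elimination) contracts all of these, and what survives is exactly the degree-$0$ copy of $P_2$. No analysis of the induced differentials is needed, because the surviving complex \emph{is} $P_2$ on the nose, not merely homotopy equivalent to it after a sign-chase.

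So your approach to idempotence is not wrong in principle, but it fails to exploit the turnback property you just proved, and as a result you are left trying to verify by hand that the induced maps $q_n$ reproduce the $P_2$ differential --- a verification the paper's argument never has to perform.
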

\begin{proof}
We will prove the turnback property first. Note that the vertical symmetry
in the definition of $P_2$ implies $P_2\otimes e_1 \cong e_1\otimes
P_2$. Consider $e_1 \otimes P_2$:

$$\begin{diagram}
\CPic{p2capped} \!\!\! & = & \!\!\! \CPic{n2-hs} &
\rTo^{\MPic{n2-s-sad}} &
q \CPic{n2-capbub} &
\rTo^{\MPic{n2-capbub-tops} \!\!\!- \MPic{n2-capbub-bots}} &
q^{3} \CPic{n2-capbub} &
\rTo^{\MPic{n2-capbub-tops} \!\!\! + \MPic{n2-capbub-bots}} &
q^{5} \CPic{n2-capbub} &  \!\!\! \!\!\!\cdots .
\end{diagram}$$

We ``deloop'' and conjugate our differentials by the isomorphism $\varphi$
in section \ref{categorified TL section} to obtain the isomorphic complex
$$\begin{diagram}
\CPic{n2-hs} \!\!\! &
\rTo^{\!\!\! A} &
q^{0} \CPic{n2-hs} \!\!\! \oplus\,\, q^{2} \CPic{n2-hs} \!\!\!&
\rTo^{B} &
q^{2} \CPic{n2-hs} \!\!\! \oplus\,\, q^{4} \CPic{n2-hs} \!\!\!&
\rTo^{C} &
q^{4} \CPic{n2-hs} \!\!\! \oplus\,\, q^{6} \CPic{n2-hs}\!\!\! &  \cdots
\end{diagram}$$

where $A = \left(\MPic{n2-hs} \MPic{n2-hs-dot} \!\!\!\right)$,

\begin{align*}
B &= \left(\begin{array}{cc}
-\MPic{n2-hs-dot} & \MPic{n2-hs}\\
\alpha \MPic{n2-hs} & -\MPic{n2-hs-dot}
\end{array}
\right) & \normaltext{and\quad\quad} &
C = \left(\begin{array}{cc}
\MPic{n2-hs-dot} & \MPic{n2-hs}\\
\alpha \MPic{n2-hs} & \MPic{n2-hs-dot}
\end{array}
\right). & \\
\end{align*}

Applying lemma \ref{sim gaussian elimination} (simultaneous Gaussian
elimination) by using the identity map in the first component of the first
map and the identity in the upper righthand component of each successive
matrix shows that the complex is homotopic to the zero complex.

The relation $P_2 \otimes P_2 \simeq P_2$ follows from expanding either the
top or bottom projector and again using lemma \ref{sim gaussian elimination}
to contract all of the projectors containing turnbacks as above. What
remains is the chain complex for $P_2$ in degree 0.
\end{proof}

\section{Categorification of the $\SO(3)$ BMW algebra} \label{sobmwcat}

In this section we show that the chain complexes obtained by applying the
second projector to the strands of a 2-cabling are invariant under
Reidemeister moves and satisfy relations categorifying those of the $\SO(3)$
BMW algebra.

As in section \ref{so bmw sec}, to any diagram $D$ associate a chain complex
$F(D)$ in the category $\Kom(2n)$ by replacing each strand in $D$ with two
parallel strands composed with the second projector. (Note that using the
categorified Kauffman skein relation in section \ref{categorified TL
  section} one associates a chain complex to oriented tangles and the two
parallel strands in the current construction are given opposite
orientations).  This can be illustrated by
$$\CPPic{orline}\hspace{-.4in}\mapsto \CPPic{doubleor}\hspace{.7in} \CPPic{line}\hspace{-.4in} \mapsto \CPPic{line-proj} \hspace{.7in} \CPPic{n2-unorcross}\hspace{-.2in} \mapsto \CPPic{n2-unorcross-proj}$$

Formally, this construction categorifies the $2$-colored Jones polynomial,
see \cite{CK} and section \ref{2 colored} for further discussion.  In the
remainder of this section we prove that the Reidemeister moves and $\SO(3)$
skein relation are satisfied up to homotopy.

\begin{lemma}{(Projector Isotopy)} \label{projector isotopy}
  A free strand can be moved over or under a projector up to homotopy. In
  pictures,
$$\CPPic{line-proj-o1} \simeq \CPPic{line-proj-o2}\hspace{1in}\CPPic{line-proj-u1} \simeq \CPPic{line-proj-u2}$$
\end{lemma}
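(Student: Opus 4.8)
The plan is to reduce the Reidemeister-II-type move in the statement to an already-established fact about the projector $P_2$, namely the turnback-contractibility from Theorem \ref{secondproj thm}. The key observation is that the difference between the two sides of \emph{Projector Isotopy} is measured by a local tangle relation: sliding a free strand across the two parallel strands of the cabling introduces a pair of crossings, and after resolving those crossings via the categorified Kauffman skein relation of Section \ref{categorified TL section}, every resolution except the trivial one produces a turnback (a cap or cup $e_1$) against the projector. First I would set up both sides as chain complexes $F(\cdot)\in\Kom(4)$ obtained by cabling and applying $P_2$; by isotopy invariance of the Khovanov/Bar-Natan complex for tangles (already built into the construction of $F$ in Sections \ref{categorified TL section} and \ref{sobmwcat}), it suffices to compare the two diagrams \emph{within} the disk where the strand passes the projector, treating everything else as a fixed cobordism context (``canopolis'' functoriality).

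The main computation is then local. Expanding the two crossings of, say, $\CPPic{line-proj-o1}$ using the skein relation yields a complex whose terms are the four Kauffman resolutions, each tensored with $P_2$ on the doubled strand; the resolutions in which the free strand gets ``caught'' on the doubled strand contain the turnback $e_1$ applied to $P_2$. By Theorem \ref{secondproj thm}, $e_1\otimes P_2\simeq 0\simeq P_2\otimes e_1$, so those summands are contractible. I would make this precise by applying Lemma \ref{sim gaussian elimination} (simultaneous Gaussian elimination): the contracting homotopies for $e_1\otimes P_2$ produced in the proof of Theorem \ref{secondproj thm} assemble, together with the skein differentials, into exactly the block structure required by that lemma, so the turnback-containing summands cancel in pairs across homological degrees. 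What survives is the single resolution in which the free strand passes cleanly over the doubled strand-projector, which is precisely (up to an overall grading shift that one checks cancels, since the two crossings have opposite signs) the complex $\CPPic{line-proj-o2}$. The under-crossing case $\CPPic{line-proj-u1}\simeq\CPPic{line-proj-u2}$ is identical after reversing the roles of the positive and negative skein resolutions, or alternatively follows by applying a mirror symmetry to the over-crossing case.

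The step I expect to be the main obstacle is the bookkeeping needed to certify that the surviving term is genuinely the clean slide with \emph{no residual grading or sign discrepancy}, and that the homotopies used in the Gaussian elimination are compatible with the ambient cobordisms (i.e.\ that the elimination can be performed locally and then glued). Concretely, one must track: (i) the $q$-shifts $q^{\pm1},q^{\pm2}$ contributed by each Kauffman resolution and verify they telescope against the homological shifts coming from $P_2$'s infinite tail; (ii) the signs in the differentials $\MPic{n2-tops}\!\pm\!\MPic{n2-bots}$ of $P_2$, which alternate, so the block maps fed into Lemma \ref{sim gaussian elimination} must be checked to have the alternating-isomorphism pattern that lemma demands; and (iii) that the two crossings, being one positive and one negative, cancel their framing/writhe contributions so no overall shift is left. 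Once these are verified the conclusion is immediate. I would structure the written proof as: (1) reduce to the local picture by functoriality; (2) expand crossings and identify turnback summands; (3) invoke $e_1\otimes P_2\simeq 0$ and Lemma \ref{sim gaussian elimination} to kill them; (4) check gradings/signs to identify the residue; (5) deduce the under-crossing case by symmetry.
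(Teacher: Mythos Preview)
Your proposal contains a genuine gap: the central claim that ``every resolution except the trivial one produces a turnback against the projector'' is false. Set up coordinates with the doubled strand vertical (strands $a,b$ with the projector attached at the bottom to $a_b,b_b$) and the free strand $c$ horizontal, crossing over $a$ and then over $b$. Each crossing has two smoothings, pairing the four local endpoints as $(\text{N--W},\text{S--E})$ or $(\text{N--E},\text{S--W})$. Running through the four global resolutions, exactly \emph{one} of them connects $a_b$ to $b_b$ (namely the one with the S--E smoothing at the first crossing and the S--W smoothing at the second, which routes $a_b$--$c_m$--$b_b$). The other three connect $a_b$ and $b_b$ to $c_l$, $c_r$, $a_t$ or $b_t$ in various ways, but never to each other, so no $e_1$ meets $P_2$. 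Contracting the single genuine turnback term leaves a three-term complex, not a single surviving resolution; moreover, none of those three terms is the diagram with the projector on the other side of the crossings, since every resolution is crossingless and has the free strand spliced into the doubled strand. The grading/sign bookkeeping you flag as the main obstacle is therefore not the issue; the combinatorics of the smoothings already blocks the argument.

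The paper takes a different and much shorter route that avoids expanding the crossings altogether. One introduces a \emph{second} copy of $P_2$ on the other side of the crossing region, so the local picture has projectors both above and below the free strand. Now expand one of the two projectors using its defining complex: the homological-degree-$0$ term is the identity on two strands, which yields exactly the picture with a single projector on the \emph{other} side; every higher-degree term is (a shift of) $e_1$. But an $e_1$ cap placed on one side of the crossings can be slid through them by the ordinary Reidemeister invariance already built into $\Kom(n)$, after which it sits directly on the remaining projector and the term is contractible by $e_1\otimes P_2\simeq 0$. Thus the two-projector complex is homotopy equivalent to each single-projector picture, and hence the two single-projector pictures are equivalent to each other. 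If you want to salvage your crossing-expansion approach, you would in effect need to reproduce this sliding argument inside your three-term residue, which is strictly more work than the two-projector trick.
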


\begin{proof} The chain complex for the diagram with the projector below the
  strand and the chain complex for the diagram with the projector above the
  strand are chain homotopy equivalent to the chain complex $C$ for the
  diagram with {\em two} projectors: one above the strand and one below the
  strand. This is true because expanding either of the two projectors in $C$
  gives the identity diagram in degree zero and every other term involves a
  turnback, which is contractible when combined with the second copy of the
  projector.
\end{proof}

This lemma allows us to show that the Reidemeister moves are satisfied.

\begin{theorem}
  This construction yields invariants of framed tangles.
\end{theorem}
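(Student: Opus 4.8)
The plan is to verify the framed Reidemeister moves for the assignment $D\mapsto F(D)$: invariance under the second and third Reidemeister moves, and invariance under the framed first Reidemeister move, in which a positive curl cancels a negative curl (this is the relation appropriate to framed, rather than ordinary, tangles --- the skewed curl relations recalled in Section~\ref{so bmw sec} already prevent a single curl from being removed).

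I would handle the second and third moves first, as they are \emph{soft}. The $2$-cable of a Reidemeister~II or~III move is realized by a finite sequence of ordinary Reidemeister~II and~III moves among the doubled strands, and the Bar--Natan/Khovanov categorification of Section~\ref{categorified TL section} is invariant under those \cite{DBN,KH}. The only point needing attention is that the projectors inserted on the original strands do not obstruct the isotopy. Since a projector can be slid along its strand by planar isotopy, I would first push every projector out of the region where the move takes place; whenever a $2$-cabled strand carrying its own projector must then be dragged across that region, Lemma~\ref{projector isotopy} allows the projector to pass over or under the free strands it crosses. After performing the now projector-free cabled move, slide the projectors back. If a Reidemeister~II move merges two arcs that each carry a projector, the two projectors sit in series and are replaced by one via the idempotence $P_2\otimes P_2\simeq P_2$ of Theorem~\ref{secondproj thm}; conversely one may always split an arc and put a projector on either piece. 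This establishes invariance under the second and third moves.

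The substance of the proof is the framed first Reidemeister move, and here I would compute $F$ of a single curl directly. Replace the curled strand by its $2$-cable with a projector inserted just next to the curl, and expand each self-crossing using the categorified Kauffman skein relation of Section~\ref{categorified TL section}. This produces a complex whose objects are $P_2$ glued to small tangles, almost all of which contain a turnback; by the turnback relation $P_2\otimes e_1\simeq 0\simeq e_1\otimes P_2$ of Theorem~\ref{secondproj thm} each such object is contractible. Because $P_2$ is itself an infinite complex, these contractions must be performed all at once, which is exactly the role of the Simultaneous Gaussian Elimination Lemma~\ref{sim gaussian elimination}; what remains is a single copy of $P_2$, shifted by an overall $q$-grading shift and homological shift recording the curl. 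Running the mirror computation for the oppositely signed curl yields the inverse shift, so a positive curl composed with a negative curl leaves $F(D)$ unchanged up to homotopy.

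Together the three cases show that $F$ is well defined on framed tangles up to homotopy equivalence in $\Kom(2n)$. The main obstacle is the curl computation: one must organize the infinitely many summands coming from the expanded self-crossings against the infinite complex $P_2$ so that Lemma~\ref{sim gaussian elimination} applies in a single step, and then track the homological and $q$-degree shifts carefully enough to see that the two signs of curl cancel. That bookkeeping, though routine, is where the real effort lies.
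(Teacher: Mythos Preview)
Your proposal is correct and follows essentially the same route as the paper: Reidemeister~II and~III are handled by the projector isotopy lemma together with $P_2\otimes P_2\simeq P_2$ and ordinary Khovanov invariance, and the curl is handled by expanding the cabled crossings and contracting the turnback terms. The only differences are cosmetic---the paper inserts an explicit delooping step in the curl computation and pins down the exact shift $t^{2}q^{4}$ (matching the skewed BMW relation) rather than merely arguing that opposite curls give inverse shifts.
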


\begin{proof}
For the second Reidemeister move,

$$\CPPic{R2-1} = \CPPic{R2-2} \simeq \CPPic{R2-3} \simeq \CPPic{R2-4} \simeq \CPPic{R2-5} = \CPPic{R2-6}$$

The first equality is by definition. The homotopy equivalence follows from
the projector isotopy lemma and $P_2\otimes P_2 \simeq P_2$. We then apply
the original second Reidemeister move and $P_2\otimes P_2 \simeq P_2$
again. The argument for the third Reidemeister move features the same ideas.

$$\CPPic{R3-1} = \CPPic{R3-2} \simeq \CPPic{R3-3} \simeq \CPPic{R3-4} \simeq \CPPic{R3-5} = \CPPic{R3-6} $$

The $q^{\pm 4}$-skewed version of first Reidemeister move (section \ref{so
  bmw sec}) are satisfied by our construction.
$$\CPPic{R1-1}\hspace{-.2in} = \CPPic{R1-2}\hspace{-.2in} \simeq  \CPPic{R1-3}$$

and
$$\CPPic{p2boxtwist}\hspace{.5in} \simeq\, t^{2} q^{4} \CPPic{p2box}.$$

Where $t^{2} q^{4}$ denotes bidegree $(2,4)$. This is obtained by
expanding all of the crossings, delooping and contracting the remaining
subcomplex consisting of projectors containing turnbacks. We've shown

$$\CPic{flipper2} \simeq\, q^{2(N-1)}\CPic{line}$$

with $N=3$. The opposite crossing follows from the same argument.
\end{proof}

\subsection{${\mathbf{SO(3)}}$ BMW Skein Relation} \label{SO3 section}

In order to prove that the first skein relation pictured in section \ref{so bmw sec}
is satisfied by our
categorification we consider the chain complex associated to a
crossing:
\begin{equation}\label{crossing eq}
\CPPic{orline}\hspace{-.4in}\mapsto \CPPic{doubleor}\hspace{1in}\CPPic{n2-unorcross} \mapsto \CPPic{so-cross}
\end{equation}

Now expanding all four crossings on the right hand side yields a chain
complex with 16 terms. (The reader may find it helpful to draw the diagram with all
16 terms to follow the argument below.) We will use the convention below to index
resolutions:
$$(abcd) = \BPic{n2-unorlabelleddouble} \normaltext{\quad where \quad} \begin{diagram} \CPPic{n2-1} & \lTo^0 & \CPPic{n2-unorcross} & \rTo^1 & \CPPic{n2-s} \end{diagram}$$

There is one circle corresponding to the (0101) resolution which can be
delooped and Gaussian elimination can be performed removing the terms
corresponding to the (0001) resolution and the (1101) resolution. Nine
of the remaining terms contain projectors with turnbacks.\footnote{Those
  corresponding to (1000), (0010), (1100), (1010), (1001),
  (0110), (0011), (1110) and (1011) resolutions.}  Contracting
using lemma \ref{sim gaussian elimination} these yields the chain complex
$$\begin{diagram} t^{-2} q^{-2} \CPPic{case0} & \rTo & t^{-1} q^{-1} \CPPic{case1} & \rTo & tq \CPPic{case1} & \rTo & t^2 q^2 \CPPic{case2}
\end{diagram}
$$

giving a categorification of the crossing formula in definition \ref{BMW to
  chrom}. The factor $(q+q^{-1})$ which comes from the two terms in the
middle is seen in the translation of the $4$-valent graph to the
Temperley-Lieb algebra (see definition \ref{def:phi} of the homomorphism
$\phi$.) Note that the diagram above is only a schematic illustration of the
chain complex for the resolution of the crossing at the beginning of section
\ref{SO3 section}: the contractions mentioned above produce maps which are
not illustrated in the above diagram. Next we will examine this chain
complex in more detail.

We now proceed to show that the relation
\begin{equation}\label{another Kauffman}
\CPic{n2-cross} - \CPic{n2-cross-2} = (q^2 - q^{-2})\Bigg(\CPic{n2-1} - \CPic{n2-s}\Bigg)
\end{equation}

holds in our category, this requires a more detailed analysis of the chain
complex considered above.  Begin by again expanding all four crossings in
(\ref{crossing eq}), corresponding to the the leftmost term in the equation
above. We obtain a chain complex with 16 terms with one term in homological
degrees $-2$ and $2$, four terms in degrees $-1$ and $1$ and six terms in
degree $0$. Form a new chain complex,
$$\begin{diagram}
 & & & &  & & 0 &\rTo & \hspace{-.1in}\CPic{case2}\hspace{.1in}\\
 &  & &       &  & & \dTo & & \dTo^1 \\
\left[\hspace{-.1in}\CPic{so-cross}\hspace{.1in}\right]_{-2}  & \rTo^{d_{-2}} & \left[\hspace{-.1in}\CPic{so-cross}\hspace{.1in}\right]_{-1} & \rTo^{d_{-1}} &\left[\hspace{-.1in}\CPic{so-cross}\hspace{.1in}\right]_0 & \rTo^{d_0} & \left[\hspace{-.1in}\CPic{so-cross}\hspace{.1in}\right]_1 & \rTo^{d_1} & \left[\hspace{-.1in}\CPic{so-cross}\hspace{.1in}\right]_2 \\
\dTo^{1} &         & \dTo & & & & & &\\
\hspace{-.1in}\CPic{case0}\hspace{.1in} &\rTo & 0 & & & & & &
\end{diagram}$$

The graded Euler characteristic of this complex is the quadrivalent vertex
in definition \ref{def:phi}. Contracting the first and last maps using the
introduced isomorphisms yields the chain complex
$$\begin{diagram} \left[\hspace{-.1in}\CPic{so-cross}\hspace{.1in}\right]_{-1} & \rTo^{d_{-1}} & \left[\hspace{-.1in}\CPic{so-cross}\hspace{.1in}\right]_0  & \rTo^{d_0} & \left[\hspace{-.1in}\CPic{so-cross}\hspace{.1in}\right]_{-1}
\end{diagram}$$

The maps $d_{-1}$ and $d_0$ remain the same as in the previous diagram and
so consist of saddles between resolutions of crossings. Now contract terms
in degrees $-1$ and $1$ that are diagrams with projectors capped by
turnbacks\footnote{Terms corresponding to (1000), (0010), (1110)
  and (1011) resolutions.}. Observe again that contracting these will not
affect the maps between remaining terms. There remains a contractible term
(1010) in degree zero (with four turnbacks) which is a direct summand of the
chain complex, that is there are no arrows starting or ending at this term,
so that contracting this term does not affect the maps between the remaining
terms.  Again delooping the term in the center corresponding to the
(0101) resolution allows one to cancel terms corresponding to (0001)
and (0111) resolutions in degrees $-1$ and $1$ respectively. These
cancelations in fact do change the maps between the remaining terms, the
resulting maps can be analyzed using the Gaussian elimination lemma
\ref{gaussian elimination}, and the result is given below.  The chain
complex
$$\begin{diagram}
\CPPic{harpo1} & \rTo &  \CPPic{harpoSE} \oplus \CPPic{harpoNE} \oplus \CPPic{harpoNW} \oplus \CPPic{harpoSW}  & \rTo & \CPPic{harpo1}\\
\end{diagram}$$

is what remains. All of the maps are saddles. Note that
all of the diagrams contain four projectors which are not pictured. The
first and last terms are the chain complex associated to the planar crossing
(the middle term in the equality below), while the four terms in the middle
have a projector capped with a turnback, and are therefore contractible.

On the other hand, expanding the lefthanded crossing in (\ref{another
  Kauffman}) rather than the righthanded one and carrying out the same
argument yields precisely the same complex! This is clear since the terms in
the diagram above are ${\pi}/2$ rotationally symmetric. It follows that in
the image of the Grothendieck group,

$$q^2\CPic{n2-1} - \CPic{n2-cross} + q^{-2}\CPic{n2-s} = \CPic{n2-quad} =  q^{-2}\CPic{n2-1} - \CPic{n2-cross-2} + q^{2}\CPic{n2-s}$$

which is equivalent to the desired relation (\ref{another Kauffman}). \qed

\subsection{Ribbon graphs} \label{ribbon graphs section}
A {\em ribbon graph} is a pair $(G,S)$ where $G$ is a graph embedded in a
surface $S$ with boundary, and the inclusion $G\subset S$ is a homotopy
equivalence. Our construction gives an invariant of ribbon graphs embedded
in the $3$-sphere. Specifically, to a ribbon graph $(G,S)$ associate a chain
complex as follows: Replace each edge of $G$ with the second Jones-Wenzl
projector $P_2$, and using the ribbon structure resolve each vertex as in
the figure below:
\begin{equation} \label{ribbon figure}
\CPic{line}\hspace{-.2in} \mapsto \CPic{p2box} \textnormal{\quad and
  \quad} \CPic{fourvalent} \mapsto \CPic{fourvalentexp}\quad .
  \end{equation}
  The
resulting curves in the neighborhood of each vertex are oriented as the
boundary of a regular neighborhood of the graph $G$ in $S$.

It is an interesting question to determine how powerful this invariant is,
and in particular whether this homology theory may be used to detect planar
graphs. Given a connected ribbon graph $(G,S)$ embedded in $S^3$,
contracting a maximal tree gives a map to the graph $G'$ with a single
vertex and a number of loops (with the same underlying surface, embedded in
$S^3$). There is an induced map on chain complexes (which amounts to the
projection onto the homological degree zero for each contracted edge, see
section \ref{delcontrcat} below.)  If the embedding of $(G,S)$ into $S^3$ is
isotopic to a planar embedding, then the homology of $G'$ is the chromatic
homology of a tree, computed in the Appendix. Analyzing the homology of
planar graphs motivated the following conjecture.

\begin{conjecture} A ribbon graph $(G,S)$ embedded into $S^3$ is isotopic to a planar graph if and only if
its homology groups $H_i$ are trivial for $i<0$, and $H_0$ is free of rank $2$.
\end{conjecture}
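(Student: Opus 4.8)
The plan is to treat the two implications separately; the forward direction should follow from a reduction to the Appendix computation, while the converse is the genuinely hard part. Throughout I would assume $G$ is connected (for a disconnected graph the chain complex is the tensor product over the components, so one should read the statement as ``$H_0$ free of rank $2^{c}$'' with $c$ the number of components, and the argument below applies componentwise).

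\emph{Planar implies the homological constraint.} Choose a spanning tree $T\subset G$ and contract its edges one at a time. Each contraction induces the chain map described in section~\ref{ribbon graphs section}: projection onto homological degree zero of the copy of $P_2$ on the contracted edge. The first step is to show that, when the ambient embedding is planar, each of these maps is a homotopy equivalence. The higher terms of the complex $P_2$ of section~\ref{second projector section} all contain a turnback, and in a planar neighborhood of a tree edge such a turnback can be slid so as to cap an adjacent projector, which is contractible by Theorem~\ref{secondproj thm} and Lemma~\ref{sim gaussian elimination}; the role of ``planar'' here is exactly to license that isotopy, once one has established invariance of the whole complex under isotopy of spatial graphs (an extension of the Reidemeister invariance already proved for tangles, using neck-cutting and the homotopy lemmas of section~\ref{homotopy lemmas}). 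After contracting $T$ one is left with the chain complex of the planar wedge of loops $G'$; since a planar wedge of loops is the dual graph of a tree, its homology is the chromatic homology of a tree, which the Appendix computes and which in particular vanishes below degree $0$ and has $H_0$ free of rank $2$. This would complete the forward direction.

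\emph{The homological constraint implies planar.} I would argue the contrapositive, splitting a spatial graph that is not isotopic to a planar one into two cases. First, if $G$ is abstractly non-planar it contains a subdivision of $K_5$ or $K_{3,3}$; the categorified contraction--deletion exact triangle of section~\ref{delcontrcat} gives a long exact sequence relating $H_\ast(G)$, $H_\ast(G/e)$ and $H_\ast(G\setminus e)$, and one would try to propagate a failure of the vanishing-and-rank condition from an explicitly computed $H_\ast(K_5)$ or $H_\ast(K_{3,3})$ upward to $G$ by induction on the remaining edges. Second, if $G$ is abstractly planar but knotted in $S^3$ --- for instance a knotted theta-graph, a knotted handcuff, or a graph containing a local knot --- the homology must detect this; reducing again to a wedge of loops one is asking whether the $P_2$-colored homology of a knotted loop or handcuff differs from that of its unknotted counterpart.

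The main obstacle is this converse, and within it the knotted-but-abstractly-planar case. It appears to require two ingredients that are not in hand: a detection theorem for the $P_2$-colored homology of spatial graphs, in the spirit of the Kronheimer--Mrowka unknot-detection results (or of the non-triviality statements for the projectors in \cite{CK}), showing that knotting cannot be invisible to this homology; and a localization/monotonicity lemma guaranteeing that a non-planar subgraph or a local knot can only create homology in negative degrees (or destroy the rank-$2$ property of $H_0$). The contraction--deletion long exact sequence is the natural vehicle for the latter, but controlling its connecting homomorphisms --- hence ruling out the cancellations that would let a non-planar feature hide --- is where the real difficulty lies. I would expect the forward implication and the abstract $K_5/K_{3,3}$ sub-case to be within reach of the machinery already developed in the paper, but would regard the full converse as a substantial open problem.
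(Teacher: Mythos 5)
There is nothing in the paper to compare your proposal against: this statement is posed as a \emph{conjecture}, and the paper offers no proof of either direction. The only supporting material is the motivating paragraph in the ribbon-graph section (contract a maximal tree, obtaining a wedge of loops $G'$; in the planar case the paper \emph{asserts} that the homology of $G'$ is the chromatic homology of a tree, computed in the Appendix) and the explicit computations for trees and theta graphs. So your proposal is an attempt at an open problem, and your own assessment is accurate: the converse --- detecting abstract non-planarity and, harder, knotted embeddings of abstractly planar graphs --- is exactly the part that is open, and neither the contraction--deletion triangle of section \ref{delcontrcat} nor anything else in the paper gives the detection or monotonicity statements you correctly identify as missing.

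Even your ``forward'' direction, however, contains a step that is not available and is in fact suspect. You claim that for a planar embedding each tree-edge contraction map (projection of the corresponding copy of $P_2$ onto its degree-zero term) is a homotopy equivalence, so that $H_\ast(G)\cong H_\ast(G')$ equals the tree homology. The paper claims only that there is an induced map, not an equivalence, and its own computations argue against your stronger claim: the theta graphs $\theta_n$ are planar ribbon graphs, yet by section \ref{theta homology} and the Appendix one has $H_0(\theta_3)=q^{-1}E_0=q^{-3}\mathbb{Z}$, of rank $1$, whereas the tree homology $H(T_n)$ has $H_0$ free of rank $2$. Thus either the conjecture must be read as a statement about the homology of $G'$ (the wedge of loops obtained after contracting a maximal tree), as the surrounding discussion suggests, or some further normalization is intended; in either reading the contraction maps cannot all be equivalences, and your reduction of ``planar $\Rightarrow$ homological constraint'' to the Appendix computation does not go through as written. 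Your proposed use of isotopy invariance for spatial graphs (moving strands past resolved vertices, not just past projectors) is also asserted rather than proved in the paper, though that gap is comparatively minor.
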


A related question is to determine whether the genus of the ribbon graph (defined as the genus of the underlying
surface $S$) is determined by this homology theory.

\section{Chromatic Categorification} \label{chromatic cat}

In this section we show that our construction produces a categorification of
the chromatic polynomial of planar graphs. To each planar graph $G$ we associate
a chain complex $\inp{G}$ whose graded Euler characteristic is a particular normalization of the chromatic polynomial.

Our construction differs in significant ways from other categorifications of
the chromatic polynomial present in the literature \cite{GR,Stosic}. In particular,
it depends on a specific choice of Frobenius algebra. This follows from the
relations in section \ref{categorified TL section}. While this rigidity may
have the disadvantage of limiting the variety of answers that our theory
provides, it allows for an extension to invariants of ribbon graphs embedded
in $\mathbb{R}^3$. This information then enriches the structure of the
underlying chromatic polynomial. See section \ref{ribbon graphs section} for more details.

In section \ref{delcontrcat} below we show that to each edge $e \in G$ which
is not a loop there is a contraction-deletion long exact sequence on the
homology of $\widehat{G}$ corresponding to the contraction-deletion relation
of section \ref{chromaticpolydef}.

\subsection{A categorification of the chromatic polynomial} \label{howdy}

In order to associate to a planar graph $G$ a chain complex $\inp{G}$
with the correct Euler characteristic, we define $\inp{G}$ to be
the evaluation of the dual graph $\widehat{G}$ in the $\SO(3)$ BMW
categorification of section \ref{sobmwcat}. For example,
{\Small
$$\hspace{-.8in}\BPic{pentagon} \hspace{.4cm} \mapsto \hspace{.4cm} \begin{minipage}{.4in}
\includegraphics[scale=.3]{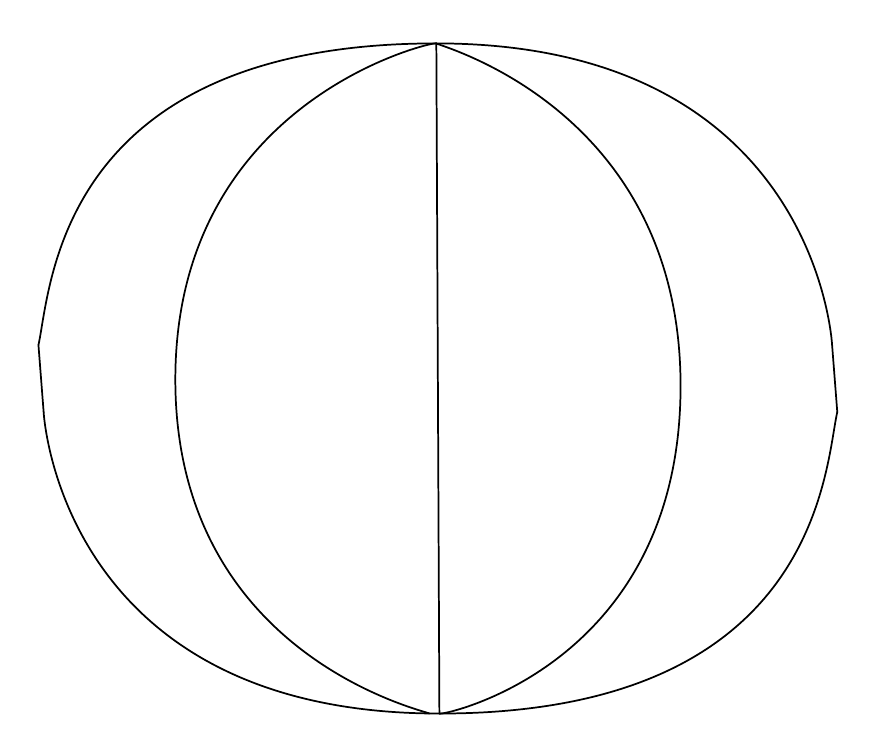}
\end{minipage} \hspace{.6in} \hspace{.4cm} \mapsto \hspace{.4cm} \begin{minipage}{.4in}
\includegraphics[scale=.325]{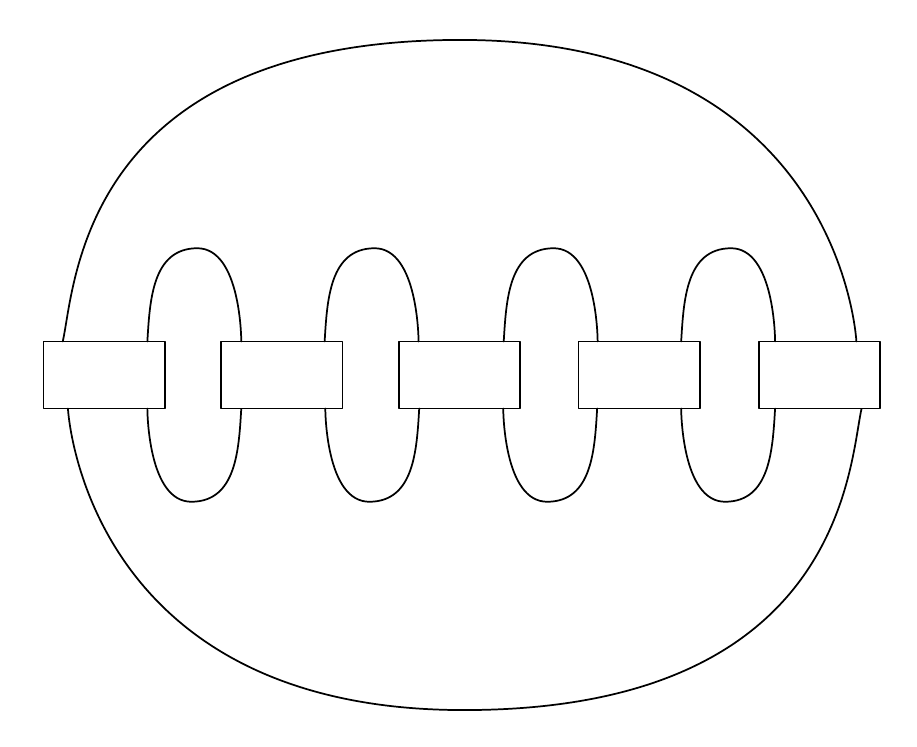}
\end{minipage}$$
}

The pentagon is dual to the graph $\theta_5$ to its right. Associated to $\theta_5$ is
the chain complex given in section \ref{ribbon graphs section}: replace each edge with
a pair or parallel strands with the second Jones-Wenzl projector, and connect the strands near each
vertex to get a planar diagram. (The
homology of $\theta_n$ for all $n > 1$ is given in the appendix).

Defining maps between planar graphs $G$ and $H$ to be chain maps between the
associated chain complexes $\inp{G}$ and $\inp{H}$ yields a category
$\mathcal{C}'$.

\begin{theorem} \label{chromatic theorem}
The category $\mathcal{C}'$ categorifies the chromatic algebra
$\mathcal{C}_0$. In particular, if $G$ is a planar graph then up to a
normalization the graded Euler characteristic of $\inp{G}$ is the chromatic
polynomial $\chi_{G}$ evaluated at $(q+q^{-1})^2$:
$$\chi_{G}\left((q+q^{-1})^2\right) = (q+q^{-1})^2 \prod_v (q+q^{-1})^{(r(v)-2)/2}\; \chi_q\inp{G},$$
where the product is taken over all vertices $v$ of the dual graph $\widehat G$ and $r(v)$ is the valence of $v$ (see definition \ref{def:phi}).
The above equation holds in the ring of formal power series
$\mathbb{Z}\llbracket q \rrbracket$.
\end{theorem}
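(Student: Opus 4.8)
The plan is to reduce the statement to a Grothendieck-group computation plus an accounting of the normalization factors coming from the homomorphism $\phi$ of Definition \ref{def:phi}. First I would observe that $\inp{G}$ is defined as the $\SO(3)$ BMW chain complex associated to the dual graph $\widehat{G}$, where each edge is cabled by the second projector $P_2$ and each vertex is resolved as in \eqref{ribbon figure}. The key structural input is the previous section: taking graded Euler characteristics is a ring homomorphism $\chi_q\co \K_0(\Kom(2n))\to\Z\llbracket q\rrbracket$ (more precisely into rational functions, but the alternating sums converge as power series here since $P_2$ itself is $\smoothing + \sum_{i\ge 1}(-1)^i q^{2i-1}\hsmoothing$), and under this map the chain complex $P_2$ goes to the honest Jones-Wenzl element $p_2\in\TL_2\otimes\Z\llbracket q\rrbracket$. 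Hence $\chi_q\inp{G}$ equals the Temperley-Lieb evaluation $\tr_{\TL}$ of the spin network obtained from $\widehat{G}$ by labeling every edge with $p_2$ and resolving each $r$-valent vertex by the planar picture in \eqref{ribbon figure} — but \emph{without} the vertex factors $(q+q^{-1})^{(r(v)-2)/2}$ that appear in $\phi$, since those factors are not built into the geometric construction of section \ref{sobmwcat}.

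Next I would match this with the classical side. By Lemma \ref{relations lemma} and the discussion preceding it, $\phi$ is trace-preserving, so
$$\tr_\chi(\widehat{G}) \;=\; \tr_{\TL}(\phi(\widehat{G}))\;=\;\Big(\prod_{v}(q+q^{-1})^{(r(v)-2)/2}\Big)\cdot \tr_{\TL}\big(\text{unnormalized }p_2\text{-network on }\widehat{G}\big),$$
where the product runs over vertices $v$ of $\widehat{G}$. Combining with the identification of the bracketed trace as $\chi_q\inp{G}$ from the previous paragraph gives $\tr_\chi(\widehat{G}) = \prod_v (q+q^{-1})^{(r(v)-2)/2}\,\chi_q\inp{G}$. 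Finally I would unwind the definition $\tr_\chi(\widehat G) = (q+q^{-1})^{-2}\,\chi_{\widehat{\widehat{G}}}((q+q^{-1})^2)$ and use that the double dual $\widehat{\widehat G}$ of a connected planar graph is $G$ itself (so $\chi_{\widehat{\widehat G}} = \chi_G$); multiplying through by $(q+q^{-1})^2$ yields exactly the displayed formula. The first assertion, that $\mathcal C'$ categorifies $\mathcal C_0$, then follows because the contraction-deletion relations \eqref{chrom relation 1}, \eqref{chrom relation 2} are realized on chain complexes — this is what section \ref{delcontrcat} establishes via the contraction-deletion long exact sequence — so $\K_0(\mathcal C')$ is a quotient of the free graph algebra by exactly those relations, i.e. $\mathcal C_0$.

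The main obstacle I anticipate is purely analytic bookkeeping rather than conceptual: one must be careful that $\chi_q$ is genuinely well-defined on the infinite complexes appearing here and that the alternating sum converges in $\Z\llbracket q\rrbracket$. The chain group $C_n$ of $P_2$ lives in $q$-degree $2n-1$ (for $n>0$) and homological degree $n$, so each fixed power of $q$ receives only finitely many contributions after cabling and resolving a finite graph; this is precisely why the identity is asserted in $\Z\llbracket q\rrbracket$ rather than in $\Z[q,q^{-1}]$, and I would make the filtration/finiteness argument explicit. A secondary point to check is that the geometric vertex resolution in \eqref{ribbon figure} really agrees on the nose (not just up to the scalar) with $\phi$'s resolution, so that the \emph{only} discrepancy between the categorified and classical sides is the recorded product of $(q+q^{-1})^{(r(v)-2)/2}$ factors; this is immediate from comparing the two figures but deserves a sentence.
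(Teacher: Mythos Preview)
Your proposal is correct and follows essentially the same route as the paper: the paper's proof is a one-sentence pointer to section \ref{chromaticpolydef}, Lemma \ref{relations lemma}, and section \ref{sobmwcat}, noting that $\langle G\rangle$ categorifies the Yamada polynomial of $\widehat G$; you have unpacked precisely those ingredients, tracking the vertex normalization factors from $\phi$ and the $(q+q^{-1})^{-2}$ from the definition of $\tr_\chi$. Your additional care with convergence in $\Z\llbracket q\rrbracket$ and the verification that the geometric vertex resolution in \eqref{ribbon figure} differs from $\phi$ only by the recorded scalars is exactly the bookkeeping the paper leaves implicit.
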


The proof of this theorem follows immediately from the discussion in section \ref{chromaticpolydef} and lemma \ref{relations lemma}
together with section \ref{sobmwcat}. (To
be precise, $\langle G\rangle$ is a categorification of the Yamada
polynomial of the dual graph \cite{Y} which is defined as the evaluation of
the spin network where each edge is labeled with the second projector).

\subsection{The contraction-deletion rule} \label{delcontrcat}

The chain complex $\inp{G}$ associated to a planar graph $G$ in section
\ref{howdy} above satisfies a version of the contraction-deletion rule. For
any edge $e\in G$ which is not a loop there is an exact triangle
\begin{equation}
\label{exact triangle eq}
\begin{diagram}  [\inp{G/e}] & \rTo & \inp{G} & \rTo & \inp{G\backslash e} \end{diagram}
\end{equation}

in the category $\mathcal{C}'$, where $[\inp{G/e}]$ is a certain chain
complex associated to $G/e$ which may be interpreted as $(q+q^{-1})^{-1}\inp{G/e}$.
There is a functor $F$ from the category $\mathcal{C}'$ to abelian
groups, given by associating to each circle a Frobenius algebra
\cite{DBN}. The homology groups of chain complexes fitting into any exact
triangle form a long exact sequence in the image of $F$ (\cite{W} 10.1.4
p.372).

Let $e$ be an edge (not a loop) of a planar graph $G$.  Consider the edge of
the dual graph, intersecting the edge $e$ in a single point. The
construction sends this dual edge to two parallel lines with a projector as in the figure on the left in (\ref{ribbon figure}).
By definition (section \ref{second projector section})
this projector is expanded into the chain complex

{\Small
$$\begin{diagram}
\CPPic{horzarcproj} & = &  \CPPic{horzarcidarc} & \rTo^{\MPic{horzarcsad}\hspace{.1in}} & \Bigg[ \hspace{-.06in} \CPPic{horzsaddlearc}\hspace{-.05in} \Bigg] \quad & =  &\quad \Cone\Bigg(\hspace{-.06in}\CPPic{horzarcsad}\hspace{-.05in}\Bigg)
\end{diagram}$$ }

where all of the terms besides the first one have been collected into the
chain complex with brackets on the right hand side above. This gives an
exact triangle by definition of the $\Cone$ complex (\cite{W} p.18,
p.371). Dualizing again yields the exact triangle (\ref{exact triangle eq}).

Note that on the level of the graded Euler characteristic (\ref{exact triangle eq}) corresponds to a re-normalized
version of the contraction-deletion rule: the term ${\chi}_{{\Gamma}/e}$ in (\ref{chromatic poly1}) acquires a coefficient
$(q+q^{-1})^{-1}$. This version of the contraction-deletion rule corresponds to the re-normalized chromatic polynomial
discussed in theorem \ref{chromatic theorem}.

\section{Computations} \label{computations}

\subsection{Homology of the unknot} \label{homology subsection}

The chain complex associated to the unknot is the ``Markov trace'' of the second
projector $P_2$ (section \ref{second projector section}). The trace of the second projector $p_2 \in \TL_2$ is given by
$$\cTrace\;= [3] = q^{-2} + 1 + q^2 .$$

Our categorification has this graded Euler characteristic when the perimeter
$\alpha = 0$. It is however \emph{not true} that the homology of $\tr(P_2)$
is spanned \emph{only} by classes that correspond to coefficients of the
graded Euler characteristic; the homology contains infinitely many terms
which cancel in the graded Euler characteristic. For further discussion see
\cite{CK}.

Taking the trace of our projector yields a complex with alternating
differential:
$$\begin{diagram}
\CPic{circlecircle} &
\rTo^{\hspace{.2in}\MPic{circlecircle-sad}\hspace{.2in}} &
q \CPic{circle} &
\rTo^{\hspace{.2in}0\hspace{.2in}} &
q^{3} \CPic{circle} &
\rTo^{\hspace{.2in}2 \MPic{circle-dot}\hspace{.2in}} &
q^{5} \CPic{circle} & \rTo^{\hspace{.2in}0\hspace{.2in}} & \cdots
\end{diagram}$$

Recall that $8\alpha = \Sigma_3$. The homology of this complex is given by

$$
\Homology_n(\tr(P_2)) =
\left\{
\begin{array}{lr}
q^{-2}\mathbb{Z} \oplus q^0\mathbb{Z} & n = 0, \alpha = 0\textnormal{ or } \alpha \ne 0\\
0                                   & n = 1, \alpha = 0\textnormal{ or } \alpha \ne 0\\
q^{4k-2}\mathbb{Z}                    &n = 2k, \alpha  = 0\\
q^{4k+2}\mathbb{Z} \oplus q^{4k}\mathbb{Z}/2      & n = 2k+1, \alpha = 0\\
0                                   & n = 2k, \alpha \ne 0\\
q^{4k+2}\mathbb{Z}/(2\alpha) \oplus q^{4k}\mathbb{Z}/2      & n = 2k+1, \alpha \ne 0\\
\end{array}
\right.
$$

\subsection{Homology of the theta graph} \label{theta homology}
We begin by expanding the middle projector,

{\Small \[
\begin{diagram}
\CPic{theta_xxx}& = &\CPic{theta_0xx}&\rTo^{\MPic{theta_0xx-saddle}} &q\Cpic{theta_1xx}&\rTo^{\MPic{theta_1xx-rightdot}-\MPic{theta_1xx-leftdot}} &q^3\Cpic{theta_1xx}&\rTo^{\MPic{theta_1xx-rightdot}+\MPic{theta_1xx-leftdot}} &q^5\Cpic{theta_1xx}&\rTo& \cdots
\end{diagram}
\]}

If we then expand the top projector,

{\Small
\[
\begin{diagram}
    \CPic{theta_00x}    &\rTo^{\MPic{theta_00x-saddlebottom}}   &q\Cpic{theta_10x}   &\rTo^0   &q^3\Cpic{theta_10x}  &\rTo^{2\MPic{theta_10x-rightdot}}   &q^5\Cpic{theta_10x}  &\rTo^0 &\cdots  \\
    \dTo^{\MPic{theta_00x-saddletop}}                &       &\dTo                &       &\dTo                 &       &\dTo&\\
    q\CPic{theta_01x}   &\rTo   &\cdot &\rTo   &\cdot  &\rTo   &\cdot  &\rTo &\cdots   \\
    \dTo^0                &       &\dTo                &       &\dTo                 &       &\dTo&\\
    q^3\Cpic{theta_01x} &\rTo   &\cdot &\rTo   &\cdot  &\rTo   &\cdot  &\rTo &\cdots   \\
    \dTo^{2\Mpic{theta_01x-rightdot}}                &       &\dTo                &       &\dTo                 &       &\dTo&\\
    q^5\Cpic{theta_01x} &\rTo   &\cdot &\rTo   &\cdot  &\rTo   &\cdot  &\rTo  &\cdots   \\
    \dTo^0                &       &\dTo                &       &\dTo                 &       &\dTo &\\
    \vdots              &       &\vdots              &       &\vdots               &       &\vdots
\end{diagram}
\]}

The middle terms are all projectors containing turnbacks, which form a contractible subcomplex. Contracting these
yields a homotopy equivalent complex which is a direct sum of

\begin{enumerate}
\item
{\Small
\[
\begin{diagram}
\CPic{theta_00x} & \rTo^{\left(\begin{array}{cc} \MPic{theta_00x-saddlebottom} \\ \MPic{theta_00x-saddletop} \end{array} \right)}    &  q \CPic{theta_10x} \oplus  q \CPic{theta_01x}     \\
\end{diagram}
\]}
\item
{\Small \begin{diagram}
\bigoplus_k  q^{4k-1} \CPic{theta_10x}& \rTo^{2\MPic{theta_10x-rightdot}} & q^{4k+1} \CPic{theta_10x} \normaltext{\quad and \quad} \bigoplus_k q^{4k-1} \CPic{theta_01x} & \rTo^{2\MPic{theta_01x-rightdot}} & q^{4k+1} \CPic{theta_01x}
\end{diagram}
}

Expanding the projector in either case shows that these are isomorphic chain
complexes. Let's \emph{define} $E$ to be this chain complex.

\end{enumerate}

In (1) the circle can be delooped yielding

{\Small
\[
\begin{diagram}
q^{-1} E & = \quad & q^{-1} \cTrace\quad  & \rTo & q \cTrace \\
\end{diagram}
\]}

after a Gaussian elimination. We are left with the task of computing $E$. We have

{\Small
\[
\begin{diagram}
\CPic{trace_0}\quad & \rTo^{\MPic{trace-saddle}} & q^2 \CPic{trace_1}\quad & \rTo^{0} & q^3 \CPic{trace_1}\quad & \rTo^{2 \MPic{trace-leftdot}} & q^5 \CPic{trace_1}\quad & \rTo^{0} &\cdots \\
\dTo^{2 \MPic{trace_0-indot}}           &      & \dTo^{2 \MPic{trace-leftdot}}           &      & \dTo^{2 \MPic{trace-leftdot}}           &      & \dTo^{2 \MPic{trace-leftdot}}           &      & \\
q^2 \CPic{trace_0}\quad & \rTo^{-\MPic{trace-saddle}} & q^3 \CPic{trace_1}\quad & \rTo^{0} & q^5 \CPic{trace_1}\quad & \rTo^{-2 \MPic{trace-leftdot}} & q^7 \CPic{trace_1}\quad & \rTo^{0} &\cdots .
\end{diagram}
\]}

The second column can be removed by delooping leaving a sum of chain
complexes of the form
$$
\begin{array}{ccc}
\begin{diagram}
q^{-1} \CPic{circle}\quad \\\dTo^{2 \MPic{circle-dot}} \\ q\CPic{circle}\quad
\end{diagram}
& \textnormal{ and }    & \begin{diagram}
q^0 \CPic{circle}\quad & \rTo^{2 \MPic{circle-dot}} & q^2 \CPic{circle}\\
 \dTo_{2 \MPic{circle-dot}} & & \dTo_{2 \MPic{circle-dot}} \\
q^2 \CPic{circle}\quad & \rTo^{-2 \MPic{circle-dot}} & q^4 \CPic{circle} \, .\\
\end{diagram}
\end{array}
$$

The first complex appears once at the origin of $E$, it has homology $q^{-2}
\mathbb{Z}$ in degree $0$ and $q^0 \mathbb{Z}/2 \oplus q^2 \mathbb{Z}$ in
degree $1$ when $\alpha = 0$. The second appears countably many times, it
has homology $q^{-1}\mathbb{Z}$ in degree $0$, $q \mathbb{Z} \oplus q
\mathbb{Z}/2 \oplus q^3 \mathbb{Z} $ in degree $1$ and $q^3 \mathbb{Z}/2
\oplus q^5 \mathbb{Z}$ in degree $2$. This can be summarized as follows,
 \begin{eqnarray*}
 E_0 &=& q^{-2}\Z\\
 E_1 &=& q^0\Z/2 \oplus q^2\Z\\
 E_2 &=& q^2\Z\\
 E_3 &=& q^4 \Z \oplus q^4 \Z/2 \oplus q^6 \Z\\
 E_4 &=& q^6 \Z \oplus q^6 \Z/2 \oplus q^8 \Z\\
 E_n &=& q^4 E_{n-2} \textnormal{ for $n\geq 5$. }
\end{eqnarray*}

If we define $E_n = 0$ for negative $n$ then we see that, when $\alpha =0$,
\[
H_k\left(\!\! \CPic{theta_xxx}\right)=q^{-1} E_k \oplus\bigoplus_{j\geq 1} q^{4j-1}(E_{k-2j}\oplus E_{k-2j}).
\]

Alternatively, we can write

\[
H(E) = \left(q^{-2}+tq^2+t^2q^{2}+\frac{t^3(q^4+q^6)}{1-tq^2}\right)\cdot \Z\oplus \left(t+\frac{t^3 q^4}{1-tq^2}\right)\cdot \Z/2\\
\]
so that
$$ H(\theta_3) = \left( q^{-1} + \frac{2 t^2q^3}{1-q^4 t^2} \right) \cdot H(E). $$

The Poincar\'e series for several families of graphs are provided in the
appendix.

\section{Structural Observations} \label{misc sec}
This section states a number of results on the structure of the chromatic
homology of planar graphs and of the homology of links.  We begin in
\ref{sheet alg} with the analysis of the chain maps from the second
projector to itself. Section \ref{matt sec} states a conjecture on the
structure of the chromatic homology of an arbitrary planar graph. In \ref{2
  colored} the homology of knots is shown to split into an interesting
``unstable'' part, closely related to Khovanov's categorification of the
$2$-colored Jones polynomial, and a periodic ``stable'' portion.

\subsection{Homology of the sheet algebra} \label{sheet alg}
Here we start by analyzing maps between objects in sections \ref{sobmwcat}
and \ref{chromatic cat} (i.e. chain complexes associated to $2$-colored
links and spin networks).  Since the categories are built up from local
pictures, the first interesting example is given by maps between two
intervals. The \emph{sheet algebra} is defined to be the chain complex of
chain maps from the second projector to itself $\Endo(\twoprojector) =
\Morph_{\Kom(2)}(\twoprojector,\twoprojector)$. This forms a differential
graded algebra with differential given by
$$ d_{\twoprojector}(f) = [d,f] = d\circ f + (-1)^{|f|} f\circ d $$

i.e. the graded commutator. The homology of the sheet algebra is homotopy
classes of maps from the projector to itself.

\begin{theorem}
The homology of the sheet algebra with $\Z$ coefficients and $\alpha=0$ is given by
\[
H(\Endo(\twoprojector))=\Z[u]\oplus\Z[u]\cdot  w\oplus \Z[u]/(2u)\cdot b,
\]
as a $\Z[u]$-module. The algebra multiplication is commutative and
determined by $w\cdot b = b^2 = w^2 = 0$. Representatives for the classes
$b$, $u$, and $w$ are given by the chain maps
\[
\begin{array}{ccc}
b &=& (\leftdot, \topdot , \topdot,  \topdot, \topdot, \topdot, \ldots) \\
u &=&  (\isaddle , \turnback, \turnback, \turnback, \turnback,\turnback,\ldots)\\
w &=& (\dottedisaddle, \topdot,\topdot, \topdot,\topdot, \topdot, \ldots)
\end{array}
\]

respectively.  These have homological degree $\deg(b) = 0$, $\deg(u) = -2$ and $\deg(w) = -3$.

\end{theorem}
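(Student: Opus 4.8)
The plan is to compute the dg algebra $\Endo(\twoprojector)=\Morph_{\Kom(2)}(\twoprojector,\twoprojector)$ by first making all morphism spaces explicit, then reducing the resulting (infinite, but highly structured) complex by delooping and Gaussian elimination, and finally matching the homology against the displayed cocycles. First I would record the Hom-spaces in $\Cob(2)$ between the two diagrams occurring in $\twoprojector$, namely two parallel strands $\CPic{n2-1}$ and the turnback $\CPic{n2-s}$. Neck-cutting together with the sphere evaluations $\CPic{sphere}=0$, $\CPic{spheredot}=1$, $\CPic{spheredotdot}=0$ reduces any cobordism between these to an identity cobordism decorated by dots, and $\alpha=\CPic{spheredotdotdot}=0$ forces two dots on one sheet to vanish; hence $\Morph(\CPic{n2-1},\CPic{n2-1})\cong\Morph(\CPic{n2-s},\CPic{n2-s})$ is, under composition, the commutative ring $R:=\Z[X_t,X_b]/(X_t^2,X_b^2)$ (one dot-generator per sheet, free of rank $4$), while $\Morph(\CPic{n2-1},\CPic{n2-s})\cong\Morph(\CPic{n2-s},\CPic{n2-1})$ is free of rank $2$, spanned by a saddle $s$ and by $s$ with a dot, and carries the $R$-bimodule structure of $A:=\Z[X]/(X^2)\cong R/(X_t-X_b)$, because once the saddle has merged the two strands a dot on either sheet acts by the same operator $X$. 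Recording the $q$-shifts from the definition of $\twoprojector$, and noting that $d_{\twoprojector}$ acts by (pre- or post-) composition with $s=d_0$ and with the elements $X_t\pm X_b\in R$, this presents $\Endo(\twoprojector)$ as an explicit bigraded complex of free abelian groups with differential the graded commutator $D=[d_{\twoprojector},-]$.

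Next I would split off the $\CPic{n2-1}$-head. Let $T$ be the subcomplex of $\twoprojector$ in homological degrees $\ge 1$ (so every term of $T$ is a copy of $\CPic{n2-s}$ and its differentials are $X_t-X_b$, $X_t+X_b$, alternating); then $\twoprojector/T=\CPic{n2-1}$ sits in homological degree $0$, giving a short exact sequence $0\to T\to\twoprojector\to\CPic{n2-1}\to 0$ whose connecting map is $s$. Applying $\Morph(\twoprojector,-)$ and $\Morph(-,T)$, $\Morph(-,\CPic{n2-1})$ produces two long exact sequences that express $H(\Endo(\twoprojector))$ through the four homologies $H\Morph(T,T)$, $H\Morph(T,\CPic{n2-1})$, $H\Morph(\CPic{n2-1},T)$, $H\Morph(\CPic{n2-1},\CPic{n2-1})$ together with the maps induced by $s$. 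Each of these four is easy: $\Morph(\CPic{n2-1},\CPic{n2-1})=R$, concentrated in degree $0$; and the key local fact is that a chain complex built from copies of $R$ whose differential is alternately multiplication by $X_t-X_b$ and $X_t+X_b$ has kernel equal to image at every interior spot (a direct check, since $\operatorname{ann}_R(X_t-X_b)=(X_t+X_b)R$ and conversely, and these ideals coincide with the relevant images). Applying this together with simultaneous Gaussian elimination (Lemma \ref{sim gaussian elimination}) shows $T$ has homology $A$ concentrated in homological degree $1$ and reduces the three remaining homologies to the cohomology of essentially $2$-periodic complexes of free $A$-modules whose differentials alternate between $0$ and multiplication by $2X$ (this $2X$ is the image of $X_t+X_b$ in $A=R/(X_t-X_b)$, and is the source of all the $2$-torsion below).

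Assembling these via the long exact sequences, while tracking the connecting homomorphisms induced by $s$, is the main obstacle. Composition with the saddle $s$ converts dots on the $\CPic{n2-s}$-sheets into dots on the merged $\CPic{n2-1}$-sheets, and it is exactly this interaction of the head with the tail that cancels essentially all of the positive-degree homology of the constituent pieces, trims the degree-$0$ contribution down to rank $2$, and kills the class that would otherwise survive in homological degree $-1$, while introducing nothing new — leaving $\Z^2$ in degree $0$, $\Z\oplus\Z/2$ in each even degree $\le -2$, $\Z$ in each odd degree $\le -3$, and zero otherwise. I expect this bookkeeping — the signs, the shifts produced by the $s$-connecting maps, and the $\Z/2$'s coming from multiplication by $2X$ — to carry essentially all of the work: the four constituent homologies are formal, but combining them correctly requires an honest local computation with saddles and the Frobenius relations, and it is this step that yields the clean final pattern, in particular the gap in homological degree $-1$.

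Finally I would verify that the displayed tuples $b$, $u$, $w$ are chain maps (for each the cocycle condition reduces, outside the first component, to the commutativity of $R$ and the equality $d_i=d_{i-2}$ in $R$, and on the first component to a single saddle identity), read off $\deg b=0$, $\deg u=-2$, $\deg w=-3$, and identify them with the generators produced above. The asserted multiplication is then visible: $b^2$, $wb$ and $w^2$ are each a sum of morphisms carrying two dots on a connected sheet, hence $\alpha$-multiples, hence zero on the nose; $b$ represents a free generator of $H^0$ because its degree-$0$ component is the asymmetric dot on a single strand of $\CPic{n2-1}$, which cannot lie in the image of $D$ (whose degree-$0$ part produces only dot-combinations symmetric in the two sheets), and the same shows $2b\ne 0$; $u$ gives the polynomial generator of the free part and $w$ the generator of the odd part; and the torsion relation $2u^k b=0$ for $k\ge 1$ is exhibited by an explicit null-homotopy built from a single neck-cutting. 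Together with the homology computation this gives $H(\Endo(\twoprojector))=\Z[u]\oplus\Z[u]\cdot w\oplus\Z[u]/(2u)\cdot b$ with the stated ring structure.
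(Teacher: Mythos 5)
Your setup is sound and your final pattern ($\Z^2$ in degree $0$, nothing in degree $-1$ or positive degrees, alternating $\Z\oplus\Z/2$ and $\Z$ below) agrees with the theorem, but the decisive computation is missing, and the one concrete device you offer for the hardest piece would, read literally, give a wrong intermediate answer. Write $\mathbf 1$ for the two-strand identity diagram, $e$ for the turnback, and $T$ for the tail of $P_2$, so $0\to T\to P_2\to\mathbf 1\to 0$. Your Hom-space calculus ($\Morph(\mathbf 1,\mathbf 1)\cong\Morph(e,e)\cong R$, $\Morph(\mathbf 1,e)\cong\Morph(e,\mathbf 1)\cong A$ with both dots acting as $X$), the local identities $\operatorname{ann}_R(X_t\mp X_b)=(X_t\pm X_b)R$, and the last-paragraph checks (that $b,u,w$ are cycles, that $b^2=wb=w^2=0$ on the nose, that the degree-$0$ component of any boundary is symmetric in the two sheets) are all correct. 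The gap is exactly where you defer: $H\Morph(T,T)$ and the long-exact-sequence assembly. $\Morph(T,T)$ is the product-totalization of a half-plane double complex, and ``collapse using the local exactness plus Lemma \ref{sim gaussian elimination}'' is not available there: the lemma cancels isomorphism entries of the differential of a single complex, and here every entry is multiplication by $X_t\pm X_b$, never an isomorphism. What you are implicitly invoking is a spectral-sequence collapse, and for this unbounded, infinite-product situation the two collapse directions disagree: replacing the target copy of $T$ by its ``homology $A$ in degree $1$'' puts $H\Morph(T,T)$ in degrees $\le 0$ with $2$-torsion in negative degrees, whereas the true answer is concentrated in degrees $\ge 0$ (the $2$-periodic tail admits shift chain maps $T\to T[k]$ for $k>0$, e.g. $(X_t,X_b,X_t,\dots)$ in degree $+1$, which that collapse cannot see). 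In particular the naive reduction manufactures a class in degree $-1$ of $\Morph(T,T)$ that does not exist, precisely where the theorem needs a gap; and the connecting-map bookkeeping that you correctly identify as carrying all the cancellation is only asserted, not performed.

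There is a clean repair inside your own framework. First check, spot by spot with exactly your local algebra, that the single complex $\Morph(P_2,e)$ is acyclic: at the head the map $R\to A$ is onto with kernel $\operatorname{im}(X_t-X_b)$, and in the tail use the ann/image identities. Then deduce that $\Morph(P_2,T)$ is acyclic: filter by the target degree; each quotient of the filtration involves only finitely many (acyclic) columns, the filtration is complete, and the Milnor $\lim^1$ sequence over these quotients finishes the argument — this limit step is the honest substitute for the elimination you proposed. It follows that postcomposition with the projection $P_2\to\mathbf 1$ is a quasi-isomorphism $\Endo(P_2)\to\Morph(P_2,\mathbf 1)$, and $\Morph(P_2,\mathbf 1)$ is the explicit complex $\cdots\to A\xrightarrow{2X}A\xrightarrow{0}A\to R$, whose homology is visibly $\Z^2$ in degree $0$ (cokernel of the map sending the saddle to $X_t+X_b$ and the dotted saddle to $X_tX_b$), $0$ in degree $-1$, and alternately $\Z\oplus\Z/2$ and $\Z$ below; your representatives $b,u,w$ and the multiplicative relations then land as in your final paragraph. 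For comparison, the paper itself gives no more than ``direct computation,'' so your outline is in the same spirit — but as written its central step is both unproved and, in its most natural reading, incorrect.
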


The proof is by direct computation. Note that $b$ is the class of the ``dotted identity'' $\NWdottedtwoprojector$.  The maps $\NEdottedtwoprojector$,
$\SWdottedtwoprojector$, and $\SEdottedtwoprojector$ are also chain maps,
but they are all homotopic to $\pm \NWdottedtwoprojector$:
\[
\NWdottedtwoprojector\simeq \SWdottedtwoprojector\simeq -\NEdottedtwoprojector\simeq -\SEdottedtwoprojector.
\]

The homology of the sheet algebra is finite dimensional as a module over the
subalgebra generated by $u$. The map $u$ shifts all of the homology down by
two degrees. As a chain map, all of its components are isomorphisms except the first
which is a saddle. The kernel of the map induced by $u$ is the ``unstable''
homology in low degree. The rest of the homology associated to a graph or
knot is called ``stable.'' See sections \ref{matt sec} and \ref{interesting}.

There is an interesting map $R$ from the projector to a rotated projector
given by
\[
\begin{diagram}
\projector & = & \straightthrough & \rTo^{\hsaddle} & q\turnback & \rTo^{\topdot-\bottomdot} & q^3\turnback & \rTo^{\topdot+\bottomdot} & q^5\turnback & \rTo &\cdots\\
\dTo_{R} & =  & \dTo_\hsaddle & & \dTo_\isaddle & & \dTo_\isaddle & & \dTo_\isaddle & &\cdots \\
e^{\frac{\pi i}{2}}\cdot \projector & = & \turnback & \rTo^\isaddle & q\straightthrough & \rTo^{\leftdot-\rightdot} & q^3\straightthrough & \rTo^{\leftdot+\rightdot} & q^5\straightthrough & \rTo & \cdots
\end{diagram}
\]

$R^2$ is a map from the projector to itself which, by neck-cutting, is equal
to $\NWdottedtwoprojector+\SEdottedtwoprojector$. This is homotopic to zero
by the above discussion.  In fact
$$R^2 = dh + hd,$$ where
\[
\begin{diagram}
\projector & = & \straightthrough & \rTo^{\hsaddle} & q\turnback & \rTo^{\topdot-\bottomdot} & q^3\turnback & \rTo^{\topdot+\bottomdot} & q^5\turnback & \rTo &\cdots\\
\dTo_{h} & = &  & \ldTo^{\isaddle} &  &\ldTo^0 &  & \ldTo^\turnback &  & \ldTo^0 &\cdots \\
\projector  & = & \straightthrough & \rTo^{\hsaddle} & q\turnback & \rTo^{\topdot-\bottomdot} & q^3\turnback & \rTo^{\topdot+\bottomdot} & q^5\turnback & \rTo &\cdots .
\end{diagram}
\]
and maps alternate between 0 and $\turnback$. Together, the maps $R$ and $h$
can be used to construct a new differential on the complex formed by pairing
a planar graph $G$ with its dual $\widehat{G}$.

\subsection{A Structural Conjecture} \label{matt sec}

A \emph{cube complex} $C = \bigoplus_{v\in\{0,1\}^n} C_v$ is a chain complex
of diagrams $C_v$ indexed by the vertices of a hypercube $\{0,1\}^n$. For
any vertex $v \in \{0,1\}^n$ set $|v| = \sum_i v_i$.  For any two vertices
$v=(v_1,\ldots,v_n)$ and $w=(w_1,\ldots,w_n)$ we say $v\leq w \textnormal{
  if } v_i\leq w_i$ where $1\leq i \leq n$. If $C$ is a cube complex and $v$
is a vertex then we define the \emph{star} of $v$ in $C$, $\Star_v(C)
\subset C$, to be the subcomplex
$$\Star_v(C) = \bigoplus_{v \leq w} C_w \,.$$

\begin{conjecture}
For every connected planar graph $G$ there exists a cube complex
$C=\bigoplus_{v\in\{0,1\}^n}C_v$ such that
\[
\inp{G}\, \simeq \bigoplus_{v\in\{0,1\}^n}\left(\frac{t^2q^3}{1-t^2q^4}\right)^{|v|}\cdot\Cone^{|v|}\Big(\Star_v(C){\buildrel  \sigma \over\rightarrow} q^2 \Star_v(C)\Big),
\]

where the map $\sigma$ is a handle.
\end{conjecture}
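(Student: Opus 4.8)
The plan is to prove the conjecture by induction on the number of independent cycles $n=\beta_1(G)=|E(G)|-|V(G)|+1$ of the connected graph $G$, peeling off one cycle at a time via the contraction--deletion exact triangle (\ref{exact triangle eq}), and to identify the power series $t^2q^3/(1-t^2q^4)$ as the generating function of the ``stable tail'' of the second projector $P_2$. Indeed, after delooping every circle that appears, the part of $P_2$ in positive homological degree is a direct sum of cones $\Cone\bigl(\text{(circle)}\xrightarrow{\sigma}q^2\,\text{(circle)}\bigr)$ of the handle map, the $j$-th summand carrying bidegree $(2j,4j-1)$ relative to the degree-$0$ identity term; summing $\sum_{j\ge1}t^{2j}q^{4j-1}$ gives exactly $t^2q^3/(1-t^2q^4)$. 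This is the mechanism already visible in the Appendix computations of $\tr(P_2)$ and of $\theta_3$, where $\inp{\theta_3}$ splits as a finite piece plus $\tfrac{t^2q^3}{1-t^2q^4}$ times (two copies of) a cone of a handle.

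\textbf{The stabilization operator.} First I would use the sheet-algebra theorem to produce the degree-$(-2)$ chain endomorphism $u$ of $\inp{G}$, applied locally at one chosen projector. By that theorem $u$ has all of its components isomorphisms except the first, which is a saddle, so $\inp{G}$ decomposes up to homotopy into the kernel of $u$ --- a ``core'' concentrated in low homological degrees --- and a $\Z[u]$-free complement on which $u$ acts invertibly up to shift. The complement is then $\tfrac{1}{1-t^2q^4}$ times a degree shift of a cone built from the core via the handle $\sigma$; this is what turns the finite core into the infinite complex $\inp{G}$ and accounts for one factor of $\Cone$ and one power-series factor. Iterating this analysis at one projector per independent cycle of $G$ should produce the $\bigl(\tfrac{t^2q^3}{1-t^2q^4}\bigr)^{|v|}\Cone^{|v|}$ pattern.

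\textbf{The induction and the cube.} For the base case $\beta_1(G)=0$, $G$ is a tree (or forest) and $\inp{G}$ is the finite complex computed in the Appendix; take $C$ to be this complex, a $0$-dimensional cube. For the inductive step, choose a non-loop edge $e$ on a cycle and apply the triangle $[\inp{G/e}]\to\inp{G}\to\inp{G\backslash e}$, where $[\inp{G/e}]=(q+q^{-1})^{-1}\inp{G/e}$. Here $\beta_1(G\backslash e)=\beta_1(G)-1$, so $\inp{G\backslash e}$ is governed by an $(n-1)$-cube $C'$ by induction; $G/e$ has the same $\beta_1$ but strictly fewer vertices, so a secondary induction on $|V(G)|$ applies, bottoming out at wedges of loops which are removed by the loop relation (\ref{chrom relation 2}) back to the single-vertex case. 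The new cube coordinate is ``$e$ deleted vs.\ $e$ contracted'', the ``deleted'' direction reproduces $C'$, and the ``contracted'' direction, once run through the vertex induction and the stabilization analysis above, contributes the $\tfrac{t^2q^3}{1-t^2q^4}$-discounted cone of $\sigma$ on the contracted complex. The subcomplexes $\Star_v(C)$ appearing in the statement record that deleting the remaining extra edges still makes sense inside each of these cones; assembling $C=\bigoplus_v C_v$ from the spanning forest plus the chosen extra edges, with $C_v$ the complex in which the edges indexed by $v$ have been ``shorted'', is the content to be checked.

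\textbf{Main obstacle.} I expect the crux to be two intertwined points. First, each contraction--deletion step, after the necessary delooping and Gaussian eliminations (Lemmas \ref{gaussian elimination} and \ref{sim gaussian elimination}), generically alters the maps between the surviving summands --- as already flagged in \S\ref{SO3 section} --- so one must prove that all cross-terms between the tails of different cycles ultimately cancel, i.e.\ that $\inp{G}$ genuinely splits as the \emph{direct sum} over $\{0,1\}^n$ claimed, rather than a nontrivial iterated extension. Second, one must show that the connecting map in every iterated cone is \emph{literally} the handle $\sigma$, not some more complicated map manufactured by the eliminations; here the natural tool is the explicit structure of the sheet algebra from \S\ref{sheet alg} --- the rotation map $R$ with $R^2\simeq 0$ and the explicit null-homotopy $h$ --- which is exactly what is needed to build and control the new differential on $\inp{G}$ paired with $\inp{\widehat{G}}$ mentioned there. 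Finally, independence of the whole construction from the choice of spanning forest and edge ordering (so that $C$ is well defined up to homotopy as a cube complex) would need to be verified, presumably by the usual ``all Gaussian eliminations commute up to homotopy'' argument; and the torsion bookkeeping (the $\Z/2$ summands and the role of $\alpha$) must be tracked throughout, since the conjecture is modeled on the $\alpha=0$ computations.
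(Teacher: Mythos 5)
The statement you are trying to prove is a \emph{conjecture} in the paper: the authors offer no proof of it, only motivating evidence --- the computation of $\tr(P_2)$ and of $\theta_n$ in the Appendix (where the factor $t^2q^3/(1-t^2q^4)$ and the cone-of-a-handle pattern are visible) and the structure of the sheet algebra in \S\ref{sheet alg}. So there is no argument in the paper to compare yours against; the only question is whether your outline actually closes the conjecture, and it does not. What you have written is a strategy whose hard steps are exactly the ones you yourself defer under ``Main obstacle'': (a) that $\inp{G}$ splits as a genuine \emph{direct sum} over $\{0,1\}^n$, and (b) that every connecting map in the iterated cones is literally the handle $\sigma$. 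These two points \emph{are} the conjecture. The contraction--deletion triangle (\ref{exact triangle eq}) only exhibits $\inp{G}$ as a cone, i.e.\ an iterated extension with a filtration; upgrading that to a direct sum requires showing that the relevant connecting maps are null-homotopic, and the paper's own discussion in \S\ref{SO3 section} warns that the Gaussian eliminations you invoke do change the surviving maps, so cross-terms have no reason to cancel without a new argument. You give none.

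The other load-bearing step is also unsupported: the sheet-algebra theorem computes $H(\Endo(\projector))$, i.e.\ homotopy classes of endomorphisms of the projector; it does not produce a decomposition of $\inp{G}$ into $\ker u$ plus a $\Z[u]$-free complement in $\Kom$. A degree $-2$ chain endomorphism whose components are isomorphisms except for one saddle does not by itself split the complex --- for that you would need, say, a homotopy idempotent built from $u$ together with a Karoubi-closure argument, and you would have to track how the $\Z/2$ torsion (and the deformation parameter $\alpha$) interacts with any such splitting; the Appendix homologies show the torsion is nontrivially interleaved with the free part, so this is not a formality. Finally, the specific combinatorial shape of the answer --- the cube $C$, the star subcomplexes $\Star_v(C)$, and the iterated cones $\Cone^{|v|}$ --- is asserted to emerge from your double induction (on $\beta_1$ and on $|V|$) but is never constructed, nor is independence of the choices of spanning forest and edge order addressed beyond a hope that eliminations commute. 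In short: the proposal is a reasonable research plan consistent with the computations that motivated the conjecture, but every step that goes beyond what the paper already verifies for $T_n$, $\theta_n$ and $\tr(P_2)$ is missing, so the conjecture remains open under your argument.
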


In other words, every chain complex breaks up into a direct sum of
subcomplexes most of which are iterated cones on handle maps. This is
precisely what happens in the computation for the theta graph in section \ref{theta homology}.

\subsection{Structure of the knot invariant} \label{2 colored}

In this section we will discuss the structure of the knot invariant defined
in section \ref{sobmwcat}.

\subsubsection{``Interesting homology is concentrated in low degree''} \label{interesting}
For any knot the homology defined in section \ref{sobmwcat} is necessarily
infinitely generated. However for any two knots we will show that all but a
finite portion of this homology is the same, and the interesting part in low
degree is closely related to the Khovanov categorification of the
$2$-colored Jones polynomial, see \ref{relation to Khovanov}.

Recall that in section \ref{categorified TL section} the dot map was defined
in terms of a handle and the differential $d_n$ of $P_2$ for $n>0$ was defined in
section \ref{second projector section} using sums and differences of these
dot maps. The proposition below implies that these maps do not change up to sign
and homotopy under the ``dotted second Reidemeister move''.

\begin{proposition}{(Handles slide through crossings)}\label{handleslide}
$$\begin{diagram}\CPPic{handslu} &\rTo^{\pm \CPic{handslul}} & \CPPic{handslu} & \hspace{.1in} & \simeq &\hspace{.1in} & \CPPic{handslt} &\rTo^{\mp \CPic{handsltl}} & \CPPic{handslt} \end{diagram}$$

\end{proposition}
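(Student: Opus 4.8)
The plan is to prove this by a direct local computation: expand the crossing into its two-term Bar-Natan complex and write down an explicit chain homotopy. Since the two resolutions of a crossing do not record the over/under information, it suffices to handle one crossing type; the other case follows by reflecting the picture, which reverses the two homological degrees and is exactly the passage $\pm\leftrightarrow\mp$ recorded in the statement. So fix, say, a positive crossing. By the categorified Kauffman relation of Section \ref{categorified TL section} its chain complex is $q\,C_{0}\xrightarrow{\,s\,}q^{2}C_{1}$, where $C_{0}$ is the parallel resolution, $C_{1}$ the turnback resolution, $s$ the saddle cobordism, and $C_{0}$ sits in homological degree~$0$.

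First I would record the two chain maps being compared. Decorating a leg $x$ of the crossing with a handle (equivalently two dots) produces a chain endomorphism $\Phi_{x}=(\Phi_{x}^{0},\Phi_{x}^{1})$, where $\Phi_{x}^{i}$ is the identity cobordism of $C_{i}$ carrying a handle on the arc of $C_{i}$ through $x$; the chain-map identity $s\,\Phi_{x}^{0}=\Phi_{x}^{1}\,s$ holds because a handle on a connected sheet of the saddle slides freely from the incoming to the outgoing boundary. Let $x$ and $y$ be the two endpoints of one strand of the crossing, i.e.\ opposite corners. In $C_{0}$ the points $x$ and $y$ lie on different parallel arcs, while in $C_{1}$ they lie on the cap and on the cup, respectively, so $\Phi_{x}$ and $\Phi_{y}$ are genuinely different chain maps; the content of the proposition is that $\Phi_{x}\simeq-\Phi_{y}$, and the sign is the source of the $\pm/\mp$ above.

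The homotopy I would use is a suitable integer multiple of the reverse saddle $\bar{s}\colon q^{2}C_{1}\to q\,C_{0}$ (the unique cobordism of the requisite bidegree, up to the relations; the multiple is $2$ when the decoration is a handle and $1$ when it is a single dot). Then $\bar{s}\,s$ is the resolution $C_{0}$ with a compressible neck joining its two parallel arcs, and the neck-cutting relation of Section \ref{categorified TL section} rewrites it as a dot on the arc through $x$ plus a dot on the arc through $y$; after clearing the overall multiple this is $\Phi_{x}^{0}+\Phi_{y}^{0}$. Symmetrically $s\,\bar{s}$ is $C_{1}$ with a compressible neck joining its cap and its cup, and neck-cutting gives $\Phi_{x}^{1}+\Phi_{y}^{1}$. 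Hence the homotopy $h$ satisfies $dh+hd=(h s,\,s h)=\Phi_{x}+\Phi_{y}$, so $\Phi_{x}+\Phi_{y}$ is null-homotopic and $\Phi_{x}\simeq-\Phi_{y}$, which is the assertion.

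The main obstacle is bookkeeping rather than conceptual: one must check that the chosen multiple of $\bar{s}$ has precisely the homological and quantum bidegree of a chain homotopy between these two degree-$0$ chain maps, so that the neck-cutting identifications above are applied between morphisms of matching degree, and one must carry the overall sign through the reflected (negative-crossing) case — which is where the interchange $\pm\leftrightarrow\mp$ originates. The geometric input, namely that a saddle composed with its reverse is a neck and that neck-cutting splits such a neck into the sum of a handle on each adjacent sheet, is immediate from the relations already set up in Section \ref{categorified TL section}.
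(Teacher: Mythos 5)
Your proposal is correct as a piece of mathematics, but it takes a genuinely different route from the paper. The paper's proof expands the crossings in the pictured configuration into a cube of resolutions and applies Gaussian elimination (Lemma \ref{gaussian elimination}) twice, reading off how the handle endomorphism is carried through the resulting homotopy equivalence; you instead exhibit an explicit null-homotopy of $\Phi_x+\Phi_y$, namely twice the reverse saddle, and use neck cutting. That computation is valid with the relations of Section \ref{categorified TL section}: the saddle composed with its reverse is the identity cobordism with a tube joining the two sheets, neck cutting turns this into the sum of the two dotted identities, the factor $2$ converts dots into handles, and the homological and $q$-degrees match, so $\Phi_x\simeq-\Phi_y$. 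This is more local and more elementary than the cube argument, and it isolates exactly where the sign comes from; the paper's Gaussian-elimination proof, on the other hand, directly treats the pictured configuration and generalizes mechanically to tracking dotted maps through any sequence of simplifications. Two caveats. First, your reading of the $\pm/\mp$ is off: your homotopy never uses the over/under data, so the relation $\Phi_x\simeq-\Phi_y$ comes out identical for both crossing types, and ``reflecting the picture'' cannot change the sign; the $\mp$ records only that the sign reverses across the equivalence, not a dependence on crossing sign. Second, the configuration in the paper's figure is the dotted second Reidemeister move, with two crossings (this is why the stated proof eliminates twice), so to recover the statement verbatim from your one-crossing lemma you should add the easy but currently unstated remark that a handle on an arc disjoint from the region being simplified commutes strictly with every component of the simplifying homotopy equivalence; with that remark your local computation transports across the simplification and gives exactly the asserted equivalence with the sign flip.
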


The proof follows from applying the Gaussian Elimination (section
\ref{homotopy lemmas}) twice on the cube obtained by expanding the crossings
on the left hand side above.

\begin{corollary}\label{hcollapse}
The chain complex associated to a framed
knot $K$ in section \ref{sobmwcat} is homotopy equivalent to
$$\begin{diagram}
K^2 &
\rTo^{\hspace{-.2in}\phi} &
q \CPic{circle} &
\rTo^{\hspace{.2in}0\hspace{.2in}} &
q^{3} \CPic{circle} &
\rTo^{\hspace{.2in}2 \MPic{circle-dot}\hspace{.2in}} &
q^{5} \CPic{circle} & \rTo^{\hspace{.2in}0\hspace{.2in}} & \cdots
\end{diagram}$$
where $K^2$ denotes the $2$-cabling of the knot $K$, and the map $\phi$ is
induced by the homotopy, see section \ref{relation to Khovanov} below.
\end{corollary}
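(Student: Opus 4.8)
The plan is to expand the projector $P_2$ along a single point of the $2$-cable, collapse every term of positive homological degree down to one circle, and keep track of what the differentials of $P_2$ become under that collapse.

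First I would reduce to a single projector: by the projector isotopy lemma (Lemma~\ref{projector isotopy}) and $P_2\otimes P_2\simeq P_2$ (Theorem~\ref{secondproj thm}), the chain complex of section~\ref{sobmwcat} for $K$ is homotopy equivalent to the complex built from the $2$-cable diagram $K^2$ by inserting one copy of $P_2$ at a chosen point of the cable. Writing out the chain groups of $P_2$ from section~\ref{second projector section}, this is
\[
\bigl[\, K^2 \ \xrightarrow{\ \mathrm{saddle}\ }\ q\,K^2_{e}\ \xrightarrow{\ d_1\ }\ q^3\,K^2_{e}\ \xrightarrow{\ d_2\ }\ q^5\,K^2_{e}\ \xrightarrow{\ d_3\ }\ \cdots\,\bigr],
\]
where $K^2_e$ is $K^2$ with the turnback $e_1$ inserted at that point, the copy of $K^2$ sits in homological degree $0$, and $d_n=(\text{top dot})+(-1)^n(\text{bottom dot})$ as in section~\ref{second projector section}. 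The key geometric point is that $K^2_e$ is a closed diagram bounding an embedded disk in $S^3$ — the capped band given by a regular neighbourhood of $K$ inside its framing annulus — hence an unknot; so by Reidemeister invariance (the invariance theorem of section~\ref{sobmwcat}) the complex of $K^2_e$ is homotopy equivalent to a single circle object $\CPic{circle}$, up to an overall grading shift determined by the framing, consistent with the shifts $q,q^3,\dots$ above.

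Next I would identify the surviving differentials. The maps $\text{top dot}$ and $\text{bottom dot}$ are handle maps supported in a small disk near the insertion point, so by Proposition~\ref{handleslide} I can slide the top dot once around the cable, through each crossing of $K^2$, and land it on the bottom dot. Because each crossing of $K$ contributes a \emph{pair} of crossings of $K^2$ that the dot traverses, and the two members of such a pair contribute equal signs, the accumulated sign is a product of squares and equals $+1$; thus $\text{top dot}\simeq \text{bottom dot}$ as chain endomorphisms of the complex of $K^2_e$. Consequently $d_n\simeq\bigl(1+(-1)^n\bigr)\,(\text{dot})$ on the circle, i.e.\ $0$ when $n$ is odd and $2\,\MPic{circle-dot}$ when $n$ is even, which is exactly the alternating pattern in the statement. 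The residual first map $\phi\colon K^2\to q\,\CPic{circle}$ is the composite of the saddle $K^2\to q\,K^2_e$ with the contraction $K^2_e\simeq\CPic{circle}$; I would leave it in this form and defer a closer description to section~\ref{relation to Khovanov}.

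Finally I would glue the columnwise simplifications into one homotopy equivalence. Viewing the displayed one-row complex as a totalisation whose entries are themselves complexes (from resolving the crossings of $K^2$ and $K^2_e$), I would cancel, column by column, everything in each positive-degree column except the surviving circle, using Lemmas~\ref{gaussian elimination} and~\ref{sim gaussian elimination}; since the differentials $d_n$ are local and supported away from the crossings being cancelled, they are unaffected except for the prescribed post-composition with the retractions, which is what turns them into $0$ and $2\,\MPic{circle-dot}$. The main obstacle is precisely this bookkeeping: checking that the dot differentials survive the Reidemeister collapse intact — this is what the handle-slide proposition buys — and pinning down the global sign so that the pattern begins with $0$ and not with $2\,\MPic{circle-dot}$, which the pairing-of-crossings argument above is designed to settle.
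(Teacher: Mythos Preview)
Your argument is essentially the paper's: both expand the single inserted projector, recognise that the turnback term $K^2_e$ is an unknot, and invoke Proposition~\ref{handleslide} together with Gaussian elimination to collapse each tail term to a circle with the dot differentials becoming the alternating $0,\ 2\,\MPic{circle-dot}$ pattern. The paper frames the handle-slide step as ``sliding the end of the unknotted $2$-cabling through the rest of the knot'' rather than sliding the top dot around to meet the bottom dot, and is terser than you on the sign bookkeeping (indeed, since both dots ride on the same retracting finger they acquire the same sign under each move, which is the cleanest way to see that the pattern starts with $0$), but the content is the same.
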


\begin{proof}
The first differential in $P_2$ (section \ref{second projector section}) is a
saddle map which turns $K^2$ into the unknot. Using
proposition \ref{handleslide} (applying the Gaussian elimination to the chain complex for $K$)
one slides the end of this unknotted $2$-cabling through the
rest of the knot. The result is pictured above.
\end{proof}

A similar statement may be proved for any link $L$. However, note that the infinite tail for knots, pictured in corollary
\ref{hcollapse}, is standard. When the number of components of $L$ is greater than one this infinite tail will involve
chain complexes for the proper sublinks of $L$.

\subsubsection{Relationship to Khovanov's categorification} \label{relation to Khovanov}

A categorification of the colored Jones polynomials was given in
\cite{CK}. When $n=2$ this construction coincides with the one in section
\ref{sobmwcat}. Here we discuss the relationship between the categorification
above and Khovanov's categorification of the colored Jones polynomial
\cite{Kh1} when $n=2$.

Khovanov defines a chain complex
\[C_{Kh}(K) = \Cone(K^2 \buildrel{\epsilon_*}\over\rightarrow \emptyset)\]

which categorifies the 2-colored Jones polynomial of a framed knot
$K$. $K^2$ is the chain complex which computes the Khovanov homology of the
$2$-cabling of $K$ and $\epsilon_*$ is induced by the $4$-dimensional
cobordism $\epsilon : K^2 \to \emptyset$ obtained by pushing the ribbon bounded by
the $2$-cabling into the $4$-ball.

In order to define $\epsilon_*$ a Morse decomposition of $\epsilon$ must be
chosen. Choose the one in which $\epsilon$ is a composition of a saddle
followed by a disk bounding the resulting unknot. This is an augmentation of
the first two terms of the chain complex in corollary \ref{hcollapse}.  If
we denote these first two terms by $C_{trunc}(K)$ then there is a short
exact sequence

$$0\rightarrow tq^2\Z\rightarrow C_{trunc}(K) \rightarrow C_{Kh}(K)\rightarrow 0$$

Where $tq^2\Z$ is the chain complex consisting only of $\Z$ in bidegree
$(1,2)$. The associated long exact sequence implies that
\[
0\rightarrow H_{trunc}^0(K)\rightarrow H^0_{Kh}(K)\rightarrow q^2\Z\rightarrow H_{trunc}^1(K)\rightarrow H^1_{Kh}(K)\rightarrow 0
\]
and $H_{trunc}^i(K)\cong H_{Kh}^i(K)$ for $i\neq 0,1$.

\section{Appendix: Computations for graphs and links} \label{highertheta}

The homology is given for certain families of graphs, and for some examples of links.

\subsection{Chromatic Homologies of trees and cycles}

\begin{align*}
\theta_n &= \begin{minipage}{.4in}
\includegraphics[scale=.4]{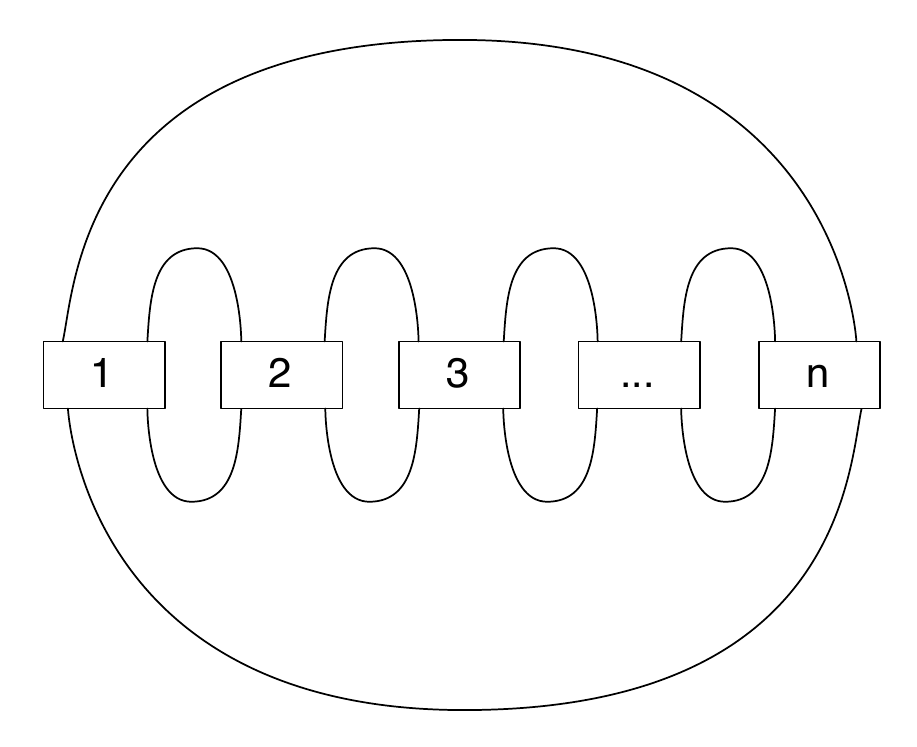}
\end{minipage}
&
T_n &= \begin{minipage}{.4in}
\includegraphics[scale=.4]{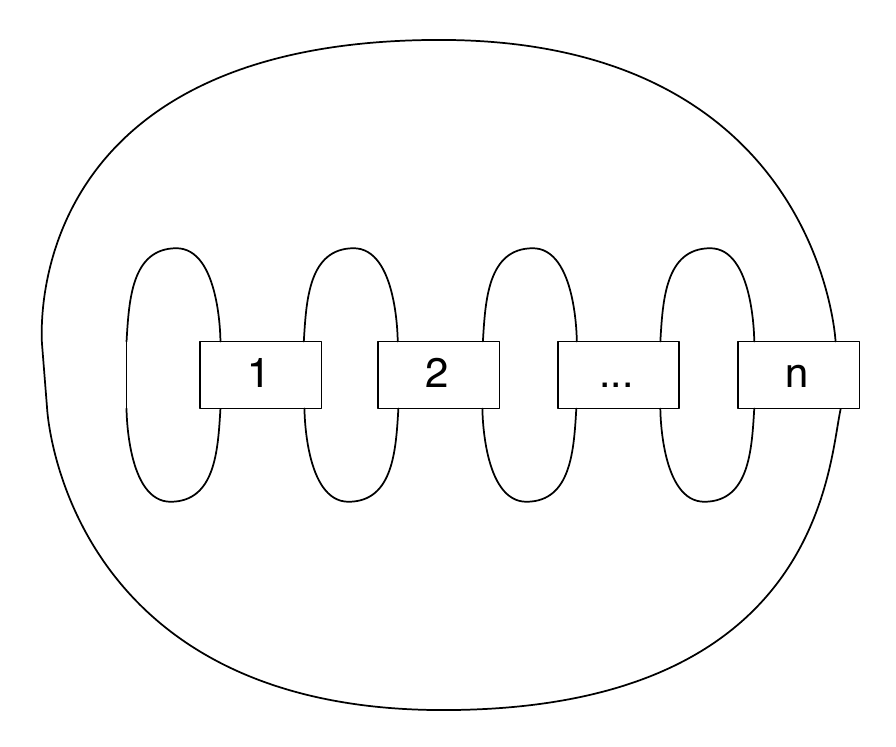}
\end{minipage} \\
T_1 &= \cTrace &  E &= \cTrace\;\rightarrow q^2 \cTrace\\
\end{align*}

Let $\theta_n$ denote the $n$th theta graph, with two vertices and $n$ edges connecting them. 
$\theta_n$ is dual to a cycle with $n$
edges or the boundary of an $n$-gon (see illustration in \ref{howdy}). 
The graph $T_n$ is dual to the graph with a single
vertex and $n$ loops. $\langle T_n\rangle$ computes the chromatic homology of any tree with $n$ edges.
The homology of the trace $T_1 = \tr(P_2)$ (section \ref{homology
  subsection}) and the chain complex $E$ (section \ref{theta homology}) are
used below to express the homologies of $T_n$ and $\theta_n$. Using $\Z$
coefficients and $\alpha = 0$ we have

\begin{eqnarray*}
H(T_n) & = & q^{1-n} H(T_1)\oplus \frac{q^{1-n}}{1+x}\left[\left(1+\frac{x^2}{1-x}\right)^{n-1}\!\! - 1\right]\bigg|_{x=tq^2}\cdot H(E),
\\
H(\theta_n) & = & H(B_n) \oplus \frac{q^{2-n}}{1-x^2}\left[\left(1+\frac{x^2}{1-x}\right)^{n-1}\!\!-\left(x+\frac{x^2}{1-x}\right)^{n-1}\!\! + x^{n-1} - 1\right]\bigg|_{x=tq^2} \cdot H(E),
\end{eqnarray*}

$$\textnormal{\quad where:\quad} H(B_n) = \begin{cases}
q^{1-2k}\frac{1-x^{2k}}{1-x^2}\Big|_{x=tq^2} \cdot H(E) & \text{ for $n=2k+1$}\\
q^{2-2k} H(T_1) \oplus q^{2-2k}\frac{x-x^{2k-1}}{1-x^2}\Big|_{x=tq^2} \cdot H(E) & \text{ for $n=2k$}
\end{cases}$$

and

\begin{eqnarray*}
H(E) &=& \left(q^{-2}+tq^2+t^2q^{2}+\frac{t^3 (q^4+q^6)}{1-tq^2}\right)\cdot \Z\oplus \left(t+\frac{t^3q^4}{1-tq^2}\right)\cdot \Z/2\\
H(T_1) &=& \left(q^{-2} + 1 +\frac{t^2q^2+t^3q^6}{1-t^2q^4}\right)\cdot \Z\oplus \left(\frac{t^3q^4}{1-t^2q^4}\right)\cdot \Z/2.
\end{eqnarray*}

\subsection{Knots and links.}

If $2_1^2$, $3_1$ and $4_1$ denote the Hopf link, the positively oriented
trefoil and figure eight knots respectively then their homologies have been
computed,

\begin{eqnarray*}
H(2_1^2) &=&(t^{-4}(q^{-8}+q^{-6})+t^{-2}q^{-4}+t\inv+(1+q^{-2})+t(1+q^2+q^4)\\
& & + \,\, t^2 q^4 +t^4(q^6+q^8) )\cdot \Z + (t\inv q^{-2}+q^2 t^2)\cdot \Z/2\\
& & +\,\, H(T_1)^2 - (q^{-2} + 1)^2\cdot \mathbb{Z}\\
H(3_1) &=& (t^{-6}(q^{-10}+q^{-8})+t^{-4}q^{-6}+t^{-3}q^{-2}+t^{-2}(q^{-4}+q^{-2})+t\inv(1+q^2)\\
& & +\,\, (1+q^{-2})+t(q^2+q^4)+ t^2q^2+t^3q^6+ t^5q^8+t^6q^{12})\cdot\Z\\
& & +\,\,(t^{-3}q^{-4}+t(1+q^2)+ t^3(2 q^4+q^6)+t^4(q^6+q^8)+t^6q^{10})\cdot \Z/2\\
& & +\,\, H(T_1) - (q^{-2} + 1) \cdot \Z \\
H(4_1) &=&
(t^{-8} q^{-14} +
t^{-7} q^{-10} +
t^{-5} q^{-8} +
t^{-4} (q^{-8} + q^{-4}) +
t^{-3} (q^{-6} + q^{-4}) \\ & & \quad +\,\,
t^{-2} (q^{-6} + q^{-4} + q^{-2}) +
t^{-1}(q^{-4} + 2 q^{-2} + 1) +
(2 q^{-2} + 3 + q^2)  \\ & & \quad +\,\,
t (1 + 2 q^2 + q^4) +
t^2 (q^2 + q^4 + q^6) +
t^3 (q^4 + q^6) +
t^4 (q^4 + q^8) +
t^5 q^8  \\ & & \quad +\,\,
t^7 q^{10} +
t^8 q^{14}) \cdot \Z  \\ & & \quad +\,\,
(t^{-7} q^{-12} +
t^{-5} (q^{-10} + q^{-8}) +
t^{-4} (q^{-8} + 2 q^{-6}) +
t^{-3} q^{-6} +
t^{-2} (2q^{-4} + q^{-2})  \\ & & \quad +\,\,
t^{-1} (q^{-4}+ 2q^{-2}) +
(q^{-2} + 1) +
t (1 + q^2) +
t^2 (1 + 2q^2 + q^4) +
t^3 (q^2 + 2q^4)  \\ & & \quad +\,\,
t^4 q^6 +
t^5 (2 q^6 + q^8) +
t^6 (q^8 + q^{10}) +
t^8 q^{12}) \cdot \Z/2 \\ & & +\,\, H(T_1) - (q^{-2} + 1) \cdot \Z \\
\end{eqnarray*}

The $H(T_1) - (q^{-2} + 1)\cdot \Z$ term is the infinite tail, see section
\ref{2 colored}. Notice in $4_1$ that the free part of the homology is
symmetric away from homological degree 0. The missing $q^2$ term can be
found in homological degree 2 of the infinite tail, giving a symmetric
graded Euler characteristic.  This was computed using the JavaKh program
written by Jeremy Green and Scott Morrison \cite{GM}.

{\bf Acknowledgements.} V. Krushkal was supported in part by NSF grant DMS-1007342 and by the I.H.E.S.

\end{document}